\newtheorem{theorem}{Theorem}[section]
\newtheorem{lemma}[theorem]{Lemma}
\newtheorem*{theorem*}{Theorem}
\newtheorem{corollary}[theorem]{Corollary}
\theoremstyle{definition}
\newtheorem{definition}{Definition}[section]
\theoremstyle{remark}
\numberwithin{equation}{section}
\author{}
\address{}
\keywords{simplicial complex, weakly systolic, weakly modular graph, triangle condition, quadrangle condition, minimal displacement set}
\subjclass[2010]{Primary 20F67, Secondary 05C99}
\begin{document}

\title{Minimal displacement set  for weakly systolic complexes}

\author[Ioana-Claudia Laz\u{a}r]{
Ioana-Claudia Laz\u{a}r\\
Politehnica University of Timi\c{s}oara, Dept. of Mathematics,\\
Victoriei Square $2$, $300006$-Timi\c{s}oara, Romania\\
E-mail address: ioana.lazar@upt.ro}

\date{\today}

\begin{abstract}

We investigate the structure of the minimal displacement set in weakly systolic complexes. We show that such set is systolic and that it embeds isometrically into the complex. 
As corollaries, we prove that any isometry of a weakly systolic complex either fixes the barycentre of some simplex (elliptic case) or it stabilizes a thick geodesic (hyperbolic case).

\end{abstract}

\maketitle

\section{Introduction}

Curvature can be expressed both in metric and combinatorial terms. On the metric side, one can refer to nonpositively curved in the sense of Aleksandrov and Gromov,
i.e. by comparing small triangles in the space with triangles in the Euclidean plane. Such triangles must satisfy the CAT(0) inequality. On the combinatorial side, one can express curvature 
using a condition, called local $6$-largeness which was introduced independently by Chepoi \cite{Ch} (under the name of bridged complexes), Januszkiewicz-{\' S}wi{\c a}tkowski \cite{JS1} and Haglund \cite{Hag}.
In  \cite{BCCGO}, \cite{CCGHO}, \cite{CCHO}, \cite{Ch08}, \cite{ChOs}, \cite{HL}, \cite{L-8loc}, \cite{L-8loc2}, \cite{O-8loc} other conditions
of this type are studied.

Weakly systolic complexes were introduced in \cite{O-sdn} and further studied in \cite{ChOs}. Such complexes can be characterized as simply connected simplicial complexes satisfying some
local combinatorial conditions. This is analogous to 
CAT($0$) cubical complexes and systolic complexes. In graph-theoretical terms, the $1$-skeleta
of weakly systolic complexes (called weakly bridged graphs) satisfy the triangle
and quadrangle conditions, i.e., weakly bridged graphs are weakly modular. 

Properties of weakly systolic complexes
resemble very much the properties of spaces of non-positive curvature. We give a few examples.
CAT(0) simplicial complexes are collapsible (see \cite{AB}, Theorem $3.2.1$)). Both weakly systolic (see \cite{ChOs}, Corollary $4.3$) and systolic complexes (see \cite{lazar_2013}, Corollary $3.4$ or as a subclass of weakly systolic complexes) are also collapsible. Moreover, the fixed point theorem was studied for CAT(0) space (see \cite{BH}, chapter II.$2$, Corollary $2.8$), for systolic complexes (see \cite{Prz}, Theorem $1.2$), and for weakly systolic complexes (see \cite{ChOs}, Theorem $5.3$).

The purpose of the current paper is to investigate further similarities between the CAT(0), the systolic, and the weakly systolic worlds. Namely, we focus on the study of the minimal displacement set in a weakly systolic complex. This set was studied before for CAT(0) spaces (see \cite{BH}), for systolic complexes (see \cite{E2}) and for $8$-located complexes with the SD'-property (see \cite{L-8loc3}). We obtain the following main results.

\begin{theorem*}  (Theorem $3.2$) 
For a (simplicial) isometry $h$ of a weakly systolic complex $X$ having no fixed simplices,
the $1$-skeleton of the minimal displacement set ($\rm{Min}_{X}(h)$) is isometrically embedded into $X$.
\end{theorem*}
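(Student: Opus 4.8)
The plan is to deduce the statement from a convexity property of the minimal displacement set. Write $L=|h|=\min_v d(v,hv)$, the minimum taken over the vertices $v$ of $X$; since $h$ has no fixed simplex, $L\ge 1$ and $h$ is the hyperbolic case of the dichotomy quoted from \cite{ChOs}. The set $\mathrm{Min}_X(h)$ is the full subcomplex spanned by the vertices $v$ with $d(v,hv)=L$. Because it is full, its $1$-skeleton is isometrically embedded as soon as every pair of vertices $u,v\in\mathrm{Min}_X(h)$ is joined by an $X$-geodesic lying in $\mathrm{Min}_X(h)$: one always has $d_{\mathrm{Min}}(u,v)\ge d_X(u,v)$, and such a geodesic gives the reverse inequality. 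I would therefore aim at the stronger assertion that $\mathrm{Min}_X(h)$ is convex, i.e. that the interval $I(u,v)=\{w:\,d(u,w)+d(w,v)=d(u,v)\}$ is contained in $\mathrm{Min}_X(h)$ whenever $u,v\in\mathrm{Min}_X(h)$. Every vertex of an $X$-geodesic from $u$ to $v$ lies in $I(u,v)$, and fullness turns that geodesic into an edge-path of $\mathrm{Min}_X(h)^{(1)}$; note that convexity automatically yields connectivity of $\mathrm{Min}_X(h)^{(1)}$, so that the graph distance is meaningful.

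The engine is thus the claim that if $u,v\in\mathrm{Min}_X(h)$ and $w\in I(u,v)$ then $d(w,hw)=L$. As $L$ is the global minimum, one always has $d(w,hw)\ge L$, so only the sharp upper bound $d(w,hw)\le L$ must be proved. This is the discrete counterpart of the CAT(0) fact that $x\mapsto d(x,hx)$ is convex along geodesics, which there follows from convexity of the distance between the two geodesics $[u,v]$ and $h[u,v]=[hu,hv]$. I would organize the discrete version through \emph{local convexity}: it suffices to show that whenever a vertex $z$ has two neighbours $u,w\in\mathrm{Min}_X(h)$ with $d(u,w)=2$, then $z\in\mathrm{Min}_X(h)$. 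For this I would apply the triangle and quadrangle conditions to the configuration $u,w,z$ together with its image $hu,hw,hz$ (recall $d(hu,hw)=2$ and $hz$ is a common neighbour), using the weak modularity of the weakly bridged graph $X^{(1)}$ to \emph{straighten} the paths $u\to hu$ and $w\to hw$ across the squares/triangles they span with $z\to hz$, and then read off $d(z,hz)\le L$ from the equalities $d(u,hu)=d(w,hw)=L$. Once local convexity is in place, the local-to-global convexity principle for weakly systolic complexes (\cite{O-sdn}, \cite{ChOs}) upgrades it to genuine convexity of the full subcomplex $\mathrm{Min}_X(h)$, which is exactly what the reduction in the first paragraph requires.

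The main obstacle is precisely this displacement control of the intermediate vertices. The soft estimate $d(z,hz)\le 2\,d(u,z)+L$ obtained from the triangle inequality is far too weak; extracting the sharp bound $d(z,hz)\le L$ is not formal and is where the weakly systolic hypothesis, via the triangle and quadrangle conditions, is indispensable. I anticipate that the cleanest implementation will first record an auxiliary geodesic-axis property — that for $v\in\mathrm{Min}_X(h)$ the concatenation of the translated geodesics $[h^{n}v,h^{n+1}v]$ is again a geodesic, so that $d(v,h^{n}v)=nL$ — and use it to feed the quadrangle condition with the correct distance data. Granting that, the verification of local convexity becomes a finite case analysis on the possible values of $d(z,hz)\in\{L,L+1,L+2\}$, ruling out the two larger values by producing, through TC/QC, a neighbour of $z$ of strictly smaller displacement, contradicting minimality unless $d(z,hz)=L$.
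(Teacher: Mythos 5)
There is a genuine gap: your argument is a strategy outline whose decisive step is deferred rather than proved. The entire difficulty of Theorem 3.2 is concentrated in your ``engine'' claim that $d(w,hw)\le L$ for $w\in I(u,v)$, equivalently the local convexity of $\mathrm{Min}_X(h)$, and at exactly that point you write ``Granting that, the verification \dots becomes a finite case analysis'' without carrying out any of it. The paper's proof \emph{is} that case analysis: a minimal counterexample $(v,w)$, the geodesic quadrilateral $\gamma\star\beta\star h(\gamma)\star\alpha$, and roughly a dozen cases (A--I) in which (TC), (QC) and the exclusion of full $4$-cycles are combined to produce a vertex of displacement $\le|h|$ where none should exist. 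Naming TC/QC as the tools is correct but is not a proof; the configurations one must rule out (missing corners, parities of $|\gamma|$, which of the four corners of the quadrilateral admit ``shortcut'' vertices) are precisely where the work lies, and nothing in your sketch shows the straightening terminates in $d(z,hz)\le L$ rather than merely $L+1$.

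Two of the specific tools you propose are moreover unavailable or overreaching. First, your ``auxiliary geodesic-axis property'' --- that the concatenation of the segments $[h^{n}v,h^{n+1}v]$ is a global geodesic, so $d(v,h^{n}v)=nL$ --- is not true at this stage of the theory: the correct statement (the paper's Lemma 3.1, after Elsner's Fact 3.2 in \cite{E2}) is only that this concatenation is an $|h|$-geodesic, i.e.\ geodesic at scales $\le|h|$; genuine invariant geodesics are obtained only later, only for some power $h^{n}$, and only under uniform local finiteness (Theorem 3.5/Corollary 3.6). Feeding (QC) with the distance data $d(v,h^{n}v)=nL$ would therefore be circular or false. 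Second, you aim at the \emph{convexity} of $\mathrm{Min}_X(h)$ (all of $I(u,v)$ inside it), which is strictly stronger than the theorem; neither this paper nor \cite{E2} establishes convexity even in the systolic case --- both prove only that \emph{some} geodesic between any two vertices of $\mathrm{Min}_X(h)$ stays inside --- and the ``local-to-global convexity principle'' you invoke for weakly systolic complexes from \cite{O-sdn}, \cite{ChOs} is cited without a precise statement and is not something you may assume off the shelf for the full subcomplex $\mathrm{Min}_X(h)$. So the reduction in your first paragraph is sound, but the route through convexity rests on an unproved (and possibly false) strengthening, while the actual content of the theorem remains untouched.
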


\begin{theorem*} (Theorem $3.4$) 
Let $h$ be a (simplicial) isometry of a weakly systolic complex $X$ having no fixed simplices. Then
the subcomplex $\rm{Min}_{X}(h)$ is systolic.
\end{theorem*}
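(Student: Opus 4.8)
The plan is to reduce the whole statement to the graph--theoretic characterisation of systolicity: a flag simplicial complex is systolic exactly when its $1$--skeleton is a bridged graph, i.e.\ a connected graph containing no isometrically embedded cycle of length $\geq 4$; such complexes are then automatically simply connected (indeed contractible), so no separate connectivity argument is needed. The input from Theorem $3.2$ is decisive, because an isometric embedding transports isometric cycles: every isometrically embedded cycle of $\mathrm{Min}_{X}(h)^{(1)}$ is again isometrically embedded in $X^{(1)}$. Thus the problem collapses to controlling which short isometric cycles of the weakly systolic complex $X$ can lie inside $\mathrm{Min}_{X}(h)$.

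First I would dispose of the easy structural points. Since $h$ is simplicial, $\mathrm{Min}_{X}(h)$ is the full subcomplex spanned by the vertices realising the minimal displacement, hence a full subcomplex of the flag complex $X$; being full in a flag complex it is itself flag, and it equals the clique complex of $\mathrm{Min}_{X}(h)^{(1)}$. Next I would invoke that $X^{(1)}$ is a weakly bridged graph: such graphs contain no induced $4$--cycle, and (using the quadrangle condition together with the absence of induced squares) they contain no isometric cycle of length $\geq 6$. Combining this with the transport of isometric cycles from the previous paragraph, the only isometric cycles of length $\geq 4$ that could occur in $\mathrm{Min}_{X}(h)^{(1)}$ are pentagons. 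Consequently, proving that $\mathrm{Min}_{X}(h)$ is systolic reduces to the single assertion that $\mathrm{Min}_{X}(h)^{(1)}$ contains \emph{no} isometrically embedded $5$--cycle.

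Excluding these pentagons is the heart of the matter, and it is precisely the step where weak systolicity of the ambient complex is upgraded to systolicity of $\mathrm{Min}_{X}(h)$: in a weakly systolic complex empty pentagons are permitted, so no ambient property alone can rule them out, and the minimality of the displacement function $x \mapsto d(x,hx)$ must be used essentially. Here I would argue by contradiction. An isometric pentagon $C$ lying in $\mathrm{Min}_{X}(h)$ is, by the defining $SD_{2}(W_{5})$ wheel condition of the weakly systolic complex $X$, completed to a $5$--wheel, so there is an apex vertex $c \in X$ adjacent to all five vertices of $C$. I would then study the action of $h$ on this wheel, comparing $d(c,hc)$ with the common minimal value $d(v,hv)$ attained on the vertices of $C$, and use the gatedness and nearest--point projection machinery available in weakly modular graphs to manufacture, from $c$ and the way the wheel is moved by $h$, a vertex whose displacement is strictly smaller than the minimum. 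That contradiction eliminates the pentagon.

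The main obstacle is exactly this displacement--reduction argument: in contrast to the $4$--cycle and long--cycle cases, it is not a formal consequence of the isometric embedding of Theorem $3.2$, and it demands precise control of how the wheel apex and the pentagon interact under $h$ inside the weakly modular metric. Once every isometric pentagon is ruled out, $\mathrm{Min}_{X}(h)^{(1)}$ is a bridged graph, so its flag complex $\mathrm{Min}_{X}(h)$ is systolic, which completes the proof.
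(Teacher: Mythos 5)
Your reduction is sound and is in fact a legitimate alternative framing of the problem: by Theorem $3.2$ isometric cycles of $\mathrm{Min}_X(h)^{(1)}$ stay isometric in $X^{(1)}$; fullness of $\mathrm{Min}_X(h)$ in the flag complex $X$ kills induced (hence isometric) $4$-cycles; and weakly bridged graphs have no isometric cycles of length $\geq 6$, so via Chepoi's bridged-graph characterization everything does come down to excluding isometric pentagons in $\mathrm{Min}_X(h)$. (The paper phrases the obstruction instead as a full $5$-cycle in the link of a vertex $z \in \mathrm{Min}_X(h)$, i.e., it attacks local $6$-largeness directly, but these endpoints are equivalent.) Your observation that an induced pentagon in $X$ acquires an apex via (TC) and the absence of full $4$-cycles is also correct.

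The genuine gap is that the pentagon-exclusion step --- which you yourself identify as the heart and the main obstacle --- is never carried out: ``study the action of $h$ on the wheel'' and ``use the gatedness and nearest-point projection machinery \dots to manufacture a vertex whose displacement is strictly smaller than the minimum'' names no construction. The apex $c$ of your wheel need not lie in $\mathrm{Min}_X(h)$, nothing controls $d(c,h(c))$ or relates $h(c)$ to an apex of the image pentagon, and no off-the-shelf gatedness statement for weakly modular graphs produces the required vertex. Worse, a plain $5$-wheel is metrically inert here: $W_5$ itself is weakly systolic, so a pentagon-plus-apex alone triggers nothing, and your outline never invokes the one axiom that distinguishes weak systolicity, the $\widehat{W}_5$-condition, which applies only to \emph{extended} wheels. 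The paper's mechanism supplies exactly the missing piece: from a full $5$-cycle $(x_1,\dots,x_5)$ in the link of $z \in \mathrm{Min}_X(h)$ it uses $d(z,h(x_i)) = |h|-1$ and the triangle condition to produce $a \sim h(x_1), h(x_2)$ with $d(a,z) = |h|-2$; then $a$ lies on a geodesic from $z$ to $h(z)$, hence $a \in \mathrm{Min}_X(h)$ by Lemma $3.1$; forbidden full $4$-cycles force the non-adjacencies making $(h(z); h(x_1),\dots,h(x_5); a)$ an extended $5$-wheel inside $\mathrm{Min}_X(h)$; and only then does the $\widehat{W}_5$-condition yield a vertex $v$ adjacent to the whole configuration, giving the distance contradiction $d(a,h(x_4)) = 2$ against the wheel structure (note the paper's contradiction is metric, not a displacement strictly below $|h|$). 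Without either carrying out this extended-wheel construction or actually exhibiting your displacement-reducing vertex, the proposal does not prove the theorem.
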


As immediate consequence of these results, it follows that any isometry of a
weakly systolic complex either fixes the barycenter of some simplex (elliptic case) or it stabilizes a
thick geodesic (hyperbolic case).

\textbf{Acknowledgements}.
The author would like to thank Victor Chepoi for useful discussions.

\section{Preliminaries}

\subsection{Generalities}

Let $X$ be a simplicial complex.
We denote by $X^{(k)}$ the $k$-skeleton of $X, 0 \leq k < \dim X$.
A subcomplex $L$ in $X$ is called \emph{full} as a subcomplex of $X$ if any simplex of $X$ spanned by a set of vertices in $L$, is a simplex of $L$.
For a set $A = \{ v_{1}, ..., v_{k} \}$ of vertices of $X$, by $\langle A \rangle$ or by $\langle  v_{1}, ..., v_{k} \rangle$ we denote the \emph{span} of $A$, i.e. the smallest full subcomplex of $X$ that contains $A$.
We write $v \sim v'$ if $\langle  v,v' \rangle \in X$ (it can happen that $v = v'$).
We write $v \nsim v'$ if $\langle  v,v' \rangle \notin X$.
We call $X$ {\it flag} if any finite set of vertices which are pairwise connected by edges of $X$, spans a simplex of $X$.

We define the \emph{combinatorial metric} on the $0$-skeleton of $X$
as the number of edges in the shortest $1$-skeleton path joining two
given vertices.  We denote by $B_k(v)$ the ball of radius $k$ centered
at a vertex $v$.

A {\it cycle} ({\it loop}) $\gamma$ in $X$ is a subcomplex of $X$ isomorphic to a triangulation of $S^{1}$.
A \emph{full cycle} in $X$ is a cycle that is full as a subcomplex of $X$.
A $k$-\emph{wheel} in $X$ $(v_{0}; v_{1}, ..., v_{k})$ (where $v_{i}, i \in \{0,..., k\}$ are vertices of $X$) is a subcomplex of $X$ such that $\gamma = (v_{1}, ..., v_{k})$ is a full cycle and $v_{0} \sim v_{1}, ..., v_{k}$.
The \emph{length} of $\gamma$ (denoted by $|\gamma|$) is the number of edges of $\gamma$.
Let $g = (v_{1}, ..., v_{k})$ be a $1$-skeleton geodesic of $X$.
We call the \emph{length} of $g$ (denoted by $|g|$ or by $|(v_{1}, ..., v_{k})|$) the number of edges of $g$.

\subsection{Systolic complexes}

Let $\sigma$ be a simplex of $X$.  The \emph{link} of $X$ at $\sigma$,
denoted $X_{\sigma}$, is the subcomplex of $X$ consisting of all
simplices of $X$ which are disjoint from $\sigma$ and which, together
with $\sigma$, span a simplex of $X$.  We call a flag simplicial
complex $k$-\emph{large} if there are no full $j$-cycles in $X$, when
$4 \le j \le k-1$.  We say $X$ is \emph{locally} $k$-\emph{large} if
all its links are $k$-large.

We define the \emph{systole}
of $X$ to be
$\rm{sys}(X) = \min\{|\gamma| : \gamma$ is a full cycle in $X \}$.
Given a natural number $k \geq 4$, a simplicial complex X is:
\begin{enumerate}
\item $k$\emph{-large} if $\rm{sys}(X) \geq k$ and $\rm{sys}(X_{\sigma}) \geq k$ for each simplex $\sigma$ of $X$;
 
\item \emph{locally} $k$\emph{-large} if the star of every simplex of $X$ is $k$-large;

\item $k$\emph{-systolic} if it is connected, simply connected and  locally $k$-large.
\end{enumerate}

\subsection{Weakly systolic complexes}

By $\widehat{W}_{k} = (c; x_{1}, x_{2}, \dots, x_{k}; a)$ we denote a full $k$-wheel
$W_{k} = (c; x_{1}, x_{2}, \dots, x_{k})$ centered at $c$ plus a
triangle $\langle a, x_1, x_2 \rangle$ such that $a \neq c,$
$a \nsim c$ and $a$ is not adjacent to any other vertex of
$W_{k}$. We call
$\widehat{W}_{k}$ an
\emph{extended $k$-wheel}.  The $\widehat{W}_{5}$\emph{-condition}
states that for any $\widehat{W}_{5}$, there exists a vertex
$v \notin \widehat{W}_{5}$ such that $\widehat{W}_{5}$ is included in
$X_{v}$, i.e., $v$ is adjacent in $X^{(1)}$ to all vertices of
$\widehat{W}_{5}$.

\begin{definition}
Let $X$ be a flag simplicial complex and let $v$ be a vertex of
$X$. We say $X$ satisfies the $SD_{n}(v)$ \it{property} if for each
$i \leq n$ and each simplex $\sigma$ whose vertices are located in the
metric sphere $S_{i+1}(v)$, the set
$\sigma_{0} := X_{\sigma} \cap B_{i}(v)$ spans a non-empty simplex of
$X$.
\end{definition}

\begin{definition}
A \emph{weakly systolic complex} is a connected flag
simplicial complex $X$ which satisfies the $SD_{n}(v)$ property for all vertices $v \in X^{(0)}$ and for
all natural numbers $n$.
\end{definition}

%\begin{defn}
%A \emph{weakly bridged graph} is the $1$-skeleton
%of a weakly systolic complex.
%\end{defn}

\begin{definition}
 We say a graph $G$ is \emph{weakly modular} if its
distance function $d$ satisfies the following conditions:

$\bullet$ Triangle condition $(TC)$: for any three vertices $u, v, w$ of $G$ with $1 = d(v, w) < d(u, v) = d(u, w)$,
there exists a common neighbor $x$ of $v$ and $w$ such that $d(u, x) = d(u, v) - 1$.

$\bullet$ Quadrangle condition $(QC)$: for any four vertices $u, v, w, z$ of $G$ with $d(v, z) = d(w, z) = 1$ and
$2 = d(v, w) \leq d(u, v) = d(u, w) = d(u, z) - 1$, there exists a common neighbor $x$ of $v$ and $w$
such that $d(u, x) = d(u, v) - 1$.
\end{definition}

The following theorem gives a few characterizations of weakly systolic complexes.

\begin{theorem} 
For a connected flag simplicial complex $X$ the following
conditions are equivalent:
\begin{enumerate}
\item $X$ is weakly systolic;
\item $X^{(1)}$ is a weakly modular graph without full $4$-cycles;
\item $X$ is simply connected, it satisfies the $\widehat{W_{5}}$-condition, and it does not contain full $4$-cycles (see \cite{ChOs}, Theorem $3.1$).
\end{enumerate}
\end{theorem}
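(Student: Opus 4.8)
The plan is to establish that the three conditions are equivalent by proving the cyclic chain $(1) \Rightarrow (2) \Rightarrow (3) \Rightarrow (1)$. Throughout, flagness is built into condition $(1)$ and is explicit in $(2)$ and $(3)$, while the absence of full $4$-cycles is the common backbone of all three; so the substantive work is to translate between the metric $SD_{n}$ property, weak modularity ($TC+QC$), and the local $\widehat{W_{5}}$-condition together with simple connectivity. I would first record the elementary observations that let each condition ``see'' the others, and only then attack the local-to-global implication, which I expect to be the single hard step.

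For $(1) \Rightarrow (2)$ I would argue directly from the $SD_{n}$ property. Let $u,v,w$ witness the hypothesis of $(TC)$, so $d(v,w)=1$ and $d(u,v)=d(u,w)=:k$; then $\langle v,w\rangle$ is an edge lying in $S_{k}(u)$, and applying the $SD$ property with $i=k-1$ shows that $X_{\langle v,w\rangle}\cap B_{k-1}(u)$ is a non-empty simplex, any vertex $x$ of which is a common neighbour of $v,w$ with $d(u,x)=k-1$. This is exactly $(TC)$. The absence of full $4$-cycles follows by the same mechanism: in a full $4$-cycle $(a,b,c,d)$ one has $d(a,c)=2$ while $b,d$ are neighbours of $c$ lying in $B_{1}(a)$, so applying $SD$ to the vertex $c\in S_{2}(a)$ forces $b\sim d$, contradicting fullness. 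Finally $(QC)$ holds \emph{vacuously}: its hypotheses place $v,w$ as two neighbours of $z$ both lying one step closer to $u$ than $z$, and $SD$ applied to $z$ forces $v\sim w$, which is incompatible with $d(v,w)=2$. Thus no configuration can even trigger $(QC)$, and $X^{(1)}$ is weakly modular with no full $4$-cycle.

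For $(2) \Rightarrow (3)$ the absence of full $4$-cycles is inherited, so two things must be produced: simple connectivity and the $\widehat{W_{5}}$-condition. Simple connectivity I would obtain by exhibiting a dismantling (elimination) order on the vertex set built from the weak-modularity inequalities; such an order makes $X$ collapsible, hence contractible, hence simply connected. The $\widehat{W_{5}}$-condition I would verify by a direct computation inside the ball around the center $c$ of the extended wheel: using $(TC)$ and $(QC)$ to propagate the adjacencies forced by the wheel and by the apex triangle $\langle a,x_{1},x_{2}\rangle$, one locates a vertex $v$ adjacent to every vertex of $\widehat{W}_{5}$. This is a finite but somewhat delicate case analysis, and it is where the no-full-$4$-cycle hypothesis is repeatedly used to rule out spurious squares.

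The implication $(3) \Rightarrow (1)$ is the heart of the theorem and the step I expect to be the main obstacle, since it upgrades the purely local $\widehat{W_{5}}$-condition to the global metric $SD_{n}$ property. I would prove it by induction on the radius $n$, using the $\widehat{W_{5}}$-condition as the inductive engine that extends a simplex in $S_{i+1}(v)$ to a non-empty, simplex-spanning down-link in $B_{i}(v)$, and invoking simple connectivity through a minimal-area disk (van Kampen) diagram argument to exclude the configurations that would otherwise obstruct the descent; the no-full-$4$-cycle hypothesis keeps these diagrams free of degenerate squares so that the filling argument terminates. This Cartan--Hadamard-type globalization is exactly the point where the local combinatorics and the topological hypothesis must be combined, and carrying out the diagram analysis cleanly is the principal difficulty; the remaining implications above are comparatively mechanical.
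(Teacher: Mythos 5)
This statement is not proved in the paper at all: it is background, quoted verbatim from Chepoi--Osajda (\cite{ChOs}, Theorem $3.1$), so the only fair comparison is against the known proof there. Your implication $(1)\Rightarrow(2)$ is correct and essentially complete: applying the $SD$ property to the edge $\langle v,w\rangle$ lying in $S_{k}(u)$ produces the vertex required by (TC); applying it to a single vertex $z\in S_{i+1}(u)$ shows that \emph{all} neighbours of $z$ in $B_{i}(u)$ span a simplex, which simultaneously kills full $4$-cycles and, as you correctly observe, makes the hypotheses of (QC) unsatisfiable, so (QC) holds vacuously. That part is sound and matches the standard translation between $SD_{n}$ and weak modularity.

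The genuine gap is that both hard implications are plans rather than proofs. In $(2)\Rightarrow(3)$ you obtain simple connectivity from a dismantling order that you never construct; moreover, dismantlability of weakly bridged graphs is itself a principal theorem of \cite{ChOs}, established there \emph{after} the characterization theorem via LexBFS orders and under local finiteness assumptions that the present statement does not make, so this route is both unproven and logically inverted relative to the literature (and ``dismantlable $\Rightarrow$ collapsible $\Rightarrow$ contractible'' is delicate for infinite complexes). The standard argument instead contracts an arbitrary loop directly by (TC)/(QC) moves, as in \cite{CCHO}. Your verification of the $\widehat{W}_{5}$-condition is announced as a ``finite but somewhat delicate case analysis'' without a single adjacency actually produced. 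Most importantly, $(3)\Rightarrow(1)$ --- which you rightly identify as the heart --- is left at the level of ``induction on $n$ plus minimal disk diagrams'': you give no mechanism by which the $\widehat{W}_{5}$-condition and the absence of full $4$-cycles reduce a minimal diagram, nor how the $SD_{i}$ property is propagated to $SD_{i+1}$. The actual proof (Osajda's local-to-global theorem in \cite{O-sdn}, used for \cite{ChOs} Theorem $3.1$) does not fill diagrams generically; it builds the universal cover inductively as an increasing union of combinatorial balls, extending the complex one sphere at a time and verifying flagness, the absence of full $4$-cycles, and the $SD$ property for each extension. Without that construction, or a genuinely worked-out substitute, the main implication remains unproved --- as your own closing sentence effectively concedes.
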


\subsection{Minimal displacement set}

Let $h$ be an isometry of a simplicial complex $X$. We define the \emph{displacement function}
$d_{h} : X^{(0)} \rightarrow \mathbf{N}$ by $d_{h}(x) = d_{X}(h(x),x)$. The \emph{translation length} of $h$ is
defined as $|h| = \min_{x \in X^{(0)}} d_{h}(x)$. 
If $h$ does not fix any simplex of $X$, then $h$ is called \emph{hyperbolic}. In such case one has
$|h| > 0$. Otherwise we call the isometry $h$ \emph{elliptic}. For a hyperbolic isometry $h$, we define the minimal displacement set $\rm{Min}_{X}(h)$
as the subcomplex of $X$ spanned by the set of vertices where $d_{h}$ attains its minimum.
Clearly $\rm{Min}_{X}(h)$ is invariant under the action of $h$.

For systolic complexes, the minimal displacement set is studied in \cite{E2}. Namely, it is proven that the following hold.

\begin{theorem}
Let $h$ be a hyperbolic isometry of a
systolic complex $X$. Then the subcomplex $\rm{Min}_{X}(h)$ is a systolic complex, isometrically
embedded into $X$ (see \cite{E2}, Propositions $3.3$ and $3.4$). 
\end{theorem}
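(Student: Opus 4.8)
The plan is to reduce both assertions --- systolicity and isometric embedding --- to a single structural property: that $\mathrm{Min}_X(h)$ is a \emph{convex} subcomplex of $X$. Indeed, it is part of the basic theory of systolic complexes (cf. \cite{JS1}) that a full, locally convex subcomplex $Y$ is itself systolic, contractible, and isometrically embedded, since every geodesic between two vertices of $Y$ may be taken inside $Y$. Thus, once convexity of the minimal displacement set is in hand, the theorem follows at once: isometric embedding is immediate, simple connectivity comes from contractibility, and local $6$-largeness is inherited because, $Y$ being full, any full cycle in $Y$ or in a link $Y_\sigma$ remains full in $X$ respectively in $X_\sigma$, whence $\mathrm{sys}(Y_\sigma) \ge \mathrm{sys}(X_\sigma) \ge 6$ for every simplex $\sigma$.

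First I would record that $\mathrm{Min}_X(h)$ is full by construction, being spanned by the vertex set on which $d_h$ attains its minimum $|h|$. The remaining task is local convexity, which in the systolic setting I would verify through $3$-convexity: given a geodesic $(u,v,w)$ of length two in $X$ with $u,w \in \mathrm{Min}_X(h)$, one must show that the midpoint $v$ also lies in $\mathrm{Min}_X(h)$, i.e. $d_h(v) = |h|$. Since $|h|$ is the global minimum we always have $d_h(v) \ge |h|$, so the entire content is the reverse inequality $d_h(v) \le |h|$.

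The crux --- and the step I expect to be the main obstacle --- is a combinatorial \emph{convexity of the displacement function} $d_h$. In the CAT(0) world this is automatic, but systolic geodesics are far from unique and distance functions are only weakly convex, so here it must be extracted from the local combinatorics. Concretely, I would apply the isometry to obtain the length-two path $(hu, hv, hw)$ and then use the triangle condition together with local $6$-largeness of links to compare $d(v,hv)$ against $d(u,hu) = d(w,hw) = |h|$: the goal is to produce, from the configuration $u,v,w,hu,hv,hw$, a vertex whose displacement is at most $|h|$ that forces $d_h(v) = |h|$. Managing the interaction between $h$ and the non-unique combinatorial geodesics --- ensuring that the triangle condition can be iterated coherently along the axis $\dots, v, hv, h^2v, \dots$ without the displacement drifting upward --- is the delicate part and will require careful bookkeeping of the metric spheres involved.

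Once $3$-convexity is established, I would invoke the systolic upgrade from local convexity to full convexity (again \cite{JS1}) to complete the reduction. It is worth stressing that this whole scheme is tailored to the systolic triangle condition; in the weakly systolic generalization pursued later in the paper, the analogous displacement-convexity step must additionally accommodate the quadrangle condition and the $\widehat{W}_5$-condition, which is precisely where the two settings will diverge.
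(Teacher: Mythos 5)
Your reduction has a genuine gap at exactly the point you flag as ``the crux'': the $3$-convexity of $\mathrm{Min}_X(h)$ (equivalently, a combinatorial convexity of the displacement function $d_h$) is never established, and there is no evidence it can be. The cited source \cite{E2}, which the paper follows, proves only that $\mathrm{Min}_X(h)$ is \emph{isometrically embedded} --- a strictly weaker conclusion than convexity --- and does so by a global contradiction argument: one takes a pair $v,w \in \mathrm{Min}_X(h)$ at minimal distance admitting no geodesic inside the set, forms the cycle $\gamma \star \beta \star h(\gamma) \star \alpha$ from geodesics $v$--$w$, $w$--$h(w)$, $h(w)$--$h(v)$, $h(v)$--$v$, and runs a case analysis with the triangle and quadrangle conditions and the exclusion of full $4$-cycles (this is precisely the structure of the long proof of Theorem 3.2 in the present paper, in the weakly systolic setting). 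If $\mathrm{Min}_X(h)$ were always convex, that entire case analysis would be superfluous, and the isometric embedding statement would not be phrased as it is; unlike the CAT(0) situation you invoke, the displacement function in a systolic complex is not convex --- the triangle condition gives descent of the distance to a fixed basepoint, not control of $d(x,hx)$ along a geodesic, and your sketch of ``iterating (TC) coherently along the axis'' does not engage the actual difficulty, which is that the two endpoints $x$ and $hx$ move simultaneously. Note also a secondary gap: the local-to-global convexity upgrade from \cite{JS1} requires the subcomplex to be connected, which for $\mathrm{Min}_X(h)$ itself needs an argument (the paper gets it from Lemma 3.1, producing $|h|$-geodesics inside $\mathrm{Min}_X(h)$).

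Your surrounding reductions are fine as conditionals: $\mathrm{Min}_X(h)$ is full by definition, and a full convex subcomplex of a systolic complex is indeed systolic and isometrically embedded, with local largeness inherited as you describe. But the paper (following \cite{E2}) also proves systolicity by a different, unconditional route that you would need in any case: it verifies directly that no link of $\mathrm{Min}_X(h)$ contains a full $5$-cycle, by building from a putative full $5$-cycle $(x_1,\dots,x_5)$ in a link an extended $5$-wheel $(h(z); h(x_1),\dots,h(x_5); a)$ inside $\mathrm{Min}_X(h)$ (using (TC), Lemma 3.1 to place the apex $a$ on a geodesic in $\mathrm{Min}_X(h)$, and forbidden full $4$-cycles) and then contradicting the wheel/vertex condition. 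So the correct proof decomposes into isometric embedding and systolicity as two separate combinatorial arguments, neither of which passes through convexity; your proposal correctly identifies what would suffice but leaves the one load-bearing step unproved, and that step is not believed to hold.
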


%Let $h$ be an isometry of a simplicial complex $X$.
%An $h$-invariant geodesic in $X$ is called an \emph{axis} %of $h$. We say that $\rm{Min}_{X}(h)$ is
%the union of axes, if for every vertex $x \in %\rm{Min}_{X}%(h)$, there is an $h$-invariant geodesic
%passing through $x$, i.e. $\rm{Min}_{X}(h)$ can be written %as follows:
%\begin{center}
%$\rm{Min}_{X}(h) = \rm{span}\{\bigcup \gamma | \gamma$ is %an $h$-invariant geodesic$\}$ ($2.1$)
%\end{center}
%In this case, the isometry $h$ acts on $X$ as a %translation along the axes by the
%number $|h|$.

 Let $k \geq 2$ be an integer. Let  $A_{k}$ denote a simplicial complex with $A_{k}^{(0)} = \mathbf{Z}$ such that $\sigma \subset \mathbf{Z}$ spans a simplex if and only if $|a-a'| \leq k$ for all $a,a' \in \sigma$. 
 
 A \it{thick geodesic} in a simplicial complex $X$ is the full subcomplex $A_{k} \subset X$, $k \geq 1$ such that 
\begin{center}
$a-a' = j k , j \in \mathbf{Z} \implies d_{X}(a,a') = d_{A_{k}}(a,a')$.
\end{center}

%Any isometry $h$ of $A_{k}$ which is hyperbolic is of the %form $x \rightarrow x+n, \forall n \in \mathbf{Z}$.

%\begin{figure}[ht]
%    \centering
  %  \includegraphics[width=0.4\textwidth]{fig8.eps}
  %  \caption{}
    % \label{Case A, $|\gamma| = 2k$}
%\end{figure}

\section{Minimal displacement set for weakly systolic complexes}

In this section we study the structure of the minimal displacement set in a weakly systolic complex. 

\begin{lemma}\label{3.1}
Let $h$ be a simplicial isometry of a weakly systolic complex $X$ without fixed simplices. Choose
a vertex $v \in \rm{Min}_{X}(h)$ and a geodesic $\alpha \subset X^{(1)}$ joining $v$ with $h(v)$. Consider a simplicial path
$\gamma : \mathbf{R} \rightarrow X$ (where $\mathbf{R}$ is given a simplicial structure with $\mathbf{Z}$ as the set of vertices) being the
concatenation of geodesics $h^{n}(\alpha), n \in \mathbf{Z}$.
Then $\gamma$ is a $|h|$-geodesic (i.e. $d(\gamma(a), \gamma(b)) = |a-b|$
if $a, b$ are such integers that $|a - b| \leq |h|$). In particular, $\rm{Im}(\gamma) \subset \rm{Min}_{X}(h)$.
\end{lemma}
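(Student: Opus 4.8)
The plan is to reduce everything to a single displacement estimate and then propagate it using the $h$-equivariance of $\gamma$ together with the elementary fact that every subpath of a geodesic is again a geodesic. Write $L = |h|$; since $h$ has no fixed simplices it is hyperbolic, so $L \ge 1$. Because $h$ is an isometry, $\gamma$ is $h$-equivariant in the sense that $\gamma(a+L) = h(\gamma(a))$ for every $a \in \mathbf{Z}$, so that $d(\gamma(a),\gamma(a+L)) = d(\gamma(a), h(\gamma(a))) = d_h(\gamma(a))$. Thus the maximal-span identity $d(\gamma(a),\gamma(a+L)) = L$ is literally the assertion that $\gamma(a) \in \mathrm{Min}_X(h)$, and the ``in particular'' clause is not an afterthought but the heart of the matter.

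First I would observe that the whole $|h|$-geodesic property follows from this maximal-span case alone. Each segment $\gamma|_{[a,a+L]}$ is a path of combinatorial length $L$, since consecutive vertices of $\gamma$ are adjacent both inside each geodesic $h^{n}(\alpha)$ and across the junctions $\gamma(nL) = h^{n}(v)$. Once its endpoints are known to be at distance $L$, such a path is a geodesic, and every subpath of a geodesic is a geodesic; hence $d(\gamma(a),\gamma(a+m)) = m$ for all $0 \le m \le L$. As any pair $a,b$ with $|a-b|\le L$ lies in a window of this form (take the smaller as left endpoint), this is exactly the $|h|$-geodesic condition.

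The core step is therefore to show that every vertex of $\alpha$ lies in $\mathrm{Min}_X(h)$. Both endpoints already do: $d_h(v)=d(v,h(v))=|\alpha|=L$, and $d_h(h(v)) = d(h(v),h^{2}(v)) = d(v,h(v)) = L$. For an interior vertex $\alpha(a)$ with $0<a<L$ I would build an explicit path from $\alpha(a)$ to $h(\alpha(a))$ by first following the tail $\alpha(a),\alpha(a+1),\dots,\alpha(L)=h(v)$ and then the head $h(\alpha(1)),\dots,h(\alpha(a))$ of the next copy $h(\alpha)$. This broken path has length $(L-a)+a = L$ regardless of $a$, whence $d_h(\alpha(a)) = d(\alpha(a),h(\alpha(a))) \le L$; combined with $d_h \ge |h| = L$ this forces $d_h(\alpha(a)) = L$, i.e. $\alpha(a) \in \mathrm{Min}_X(h)$. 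Since $\mathrm{Min}_X(h)$ is $h$-invariant and every vertex of $\gamma$ has the form $h^{n}(\alpha(a))$, we conclude $\mathrm{Im}(\gamma) \subset \mathrm{Min}_X(h)$; via the equivariance this yields $d(\gamma(a),\gamma(a+L)) = L$ for all $a$, and the upgrade of the previous paragraph then completes the proof.

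The main obstacle here is mild and essentially bookkeeping: one must check that the broken path through a junction really has length exactly $L$ (the saving $-a$ from shortening the tail of $\alpha$ is cancelled precisely by the cost $+a$ of the head of $h(\alpha)$), and one must apply the geodesic-subpath argument to windows of parameter-length exactly $L$. I expect no use of weak systolicity in this lemma: the argument is valid in any graph and is the combinatorial analogue of the convexity of the displacement function in the $\mathrm{CAT}(0)$ setting. Weak systolicity should enter only for the isometric embedding (Theorem $3.2$) and the systolicity (Theorem $3.4$) of $\mathrm{Min}_X(h)$.
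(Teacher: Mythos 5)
Your proof is correct and follows essentially the same route as the paper's: the paper likewise reduces to the maximal case $|a-b|=|h|$, obtains the lower bound $d(\gamma(a),\gamma(b))\ge |h|$ from the equivariance $\gamma(b)=h(\gamma(a))$ together with the definition of translation length, and gets the upper bound from $\gamma$ being a simplicial map --- your explicit broken path through the junction is exactly that simplicial bound spelled out, and your subpath-of-a-geodesic upgrade is the paper's compressed remark that the maximal case ``implies the general case.'' Your added observations, that the maximal-span identity is precisely the statement $\mathrm{Im}(\gamma)\subset \mathrm{Min}_{X}(h)$ and that weak systolicity plays no role in this lemma, are accurate and consistent with the paper's much terser argument.
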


\begin{proof}
 The proof is similar to the one given in \cite{E2}, Fact $3.2$ for systolic complexes.
We prove the statement for $|a-b| = |h|$ (this implies the general case). Then, by the
construction of $\gamma$, either $\gamma(b) = h(\gamma(a))$ or $\gamma(a) = h(\gamma(b))$. Thus we have $d(\gamma(a), \gamma(b)) \geq |h|$.
The opposite inequality follows from the fact that $\gamma$ is a simplicial map.
\end{proof}

Below we show the paper's main result.

\begin{theorem}\label{3.2}
For a (simplicial) isometry $h$ of a weakly systolic complex $X$ having no fixed simplices,
the $1$-skeleton of $\rm{Min}_{X}(h)$ is isometrically embedded into $X$.
\end{theorem}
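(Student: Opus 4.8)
The plan is to show that for any two vertices $u,w \in \mathrm{Min}_X(h)^{(0)}$, the distance measured inside $\mathrm{Min}_X(h)^{(1)}$ equals the distance $d_X(u,w)$ in the ambient complex. Since $\mathrm{Min}_X(h)^{(1)}$ is a subgraph of $X^{(1)}$, the inequality $d_X(u,w) \le d_{\mathrm{Min}}(u,w)$ is automatic; the content is the reverse inequality, i.e.\ producing a geodesic in $X$ between $u$ and $w$ that stays inside the minimal displacement set. The natural strategy is to take an arbitrary $X$-geodesic $\beta$ from $u$ to $w$ and push it into $\mathrm{Min}_X(h)$ without increasing its length, using the weakly modular structure of $X^{(1)}$ (the triangle and quadrangle conditions from Theorem~$2.1(2)$) together with the $h$-invariance of $\mathrm{Min}_X(h)$.

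\medskip
\emph{Key steps.} First I would set up the characterization of membership in $\mathrm{Min}_X(h)$: a vertex $x$ lies in the minimal set precisely when $d_X(x,h(x)) = |h|$, and by Lemma~\ref{3.1} the bi-infinite concatenation $\gamma$ of translates of a geodesic $\alpha$ from $v$ to $h(v)$ lies entirely in $\mathrm{Min}_X(h)$ and is a local geodesic with the $|h|$-geodesic property. Second, given $u,w \in \mathrm{Min}_X(h)$ and an ambient geodesic $\beta$ between them, I would consider the ``projection'' of $\beta$ obtained by comparing each intermediate vertex with its $h$-image and invoking weak modularity: at each vertex $x_i$ of $\beta$ one controls $d(x_i,h(x_i))$, and the goal is to show one can choose the geodesic so that this displacement never exceeds $|h|$. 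Third, I would argue that if some vertex of $\beta$ had displacement strictly greater than $|h|$, then applying $(TC)$ or $(QC)$ to the configuration formed by $x_i$, $h(x_i)$, and their neighbours along $\beta$ and $h(\beta)$ yields a replacement vertex of strictly smaller displacement, contradicting either minimality of $|h|$ or the geodesic length; iterating, one flattens $\beta$ into a path inside $\mathrm{Min}_X(h)$ of the same length. Finally, combining with the trivial inequality gives the isometric embedding.

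\medskip
\emph{Main obstacle.} The hard part will be the third step: carrying out the weakly modular ``flattening'' argument in a controlled way so that the displacement function is pushed down to $|h|$ along the whole geodesic simultaneously, rather than merely at isolated vertices. In the systolic case of \cite{E2} this is handled by convexity-type properties and the absence of full $j$-cycles for $4 \le j \le 5$; here one has only the weaker $\widehat{W_5}$-condition and the presence of the extra wheel structure, so the triangle and quadrangle conditions must be applied iteratively while tracking that no full $4$-cycle is created and that the $h$-equivariance is respected at each modification. I expect that the cleanest route is to argue contrapositively: suppose $d_{\mathrm{Min}}(u,w) > d_X(u,w)$, take a counterexample pair at minimal $X$-distance, and derive a vertex adjacent to both $u$ and a point of $\beta$ that already lies in $\mathrm{Min}_X(h)$ via $(TC)$/$(QC)$, contradicting minimality of the chosen pair. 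The delicate bookkeeping lies in verifying that the common neighbour supplied by weak modularity indeed has displacement exactly $|h|$, which is where the interaction between the combinatorial curvature condition and the isometry $h$ is essential.
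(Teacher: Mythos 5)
Your setup coincides with the paper's: the trivial inequality, reduction to producing an $X$-geodesic between a minimal counterexample pair $v,w$ that stays in $\mathrm{Min}_X(h)$, the use of Lemma \ref{3.1} to get geodesics $\alpha$ from $v$ to $h(v)$ and $\beta$ from $w$ to $h(w)$ inside $\mathrm{Min}_X(h)$, and the intent to track the displacement $d(x_i,h(x_i))$ along a geodesic $\gamma$ from $v$ to $w$ via (TC)/(QC). But the proposal stops exactly where the proof begins. The step you defer as the ``main obstacle'' is not a bookkeeping refinement of your sketch; it is the entire content of the theorem, and the one concrete mechanism you do propose would fail. An interior vertex of $\gamma$ can perfectly well have displacement $|h|+1$ or $|h|+2$ without any contradiction arising from a single application of (TC) or (QC) at that vertex: nothing forces the common neighbour supplied by weak modularity to have smaller displacement, since (TC)/(QC) control distances to a fixed basepoint, not values of the (non-convex, a priori badly behaved) function $x \mapsto d(x,h(x))$. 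Arguing that a displacement strictly above $|h|$ descends locally would amount to claiming $d_h$ has no local minima above $|h|$, which is not available and is not what the paper proves.

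What the paper actually does, and what is missing from your plan, is a global $h$-equivariant construction: it forms the closed cycle $\gamma \star \beta \star h(\gamma) \star \alpha$ from $\gamma$, its image $h(\gamma)$, and the two geodesics in $\mathrm{Min}_X(h)$, and then defines two transition vertices on $\gamma$ --- the last vertex $y$ whose geodesic to $h(y)$ travels around the ``left'' of this cycle (through $v$, $\alpha$, $h(v)$) and the first vertex $x$ whose geodesic to $h(x)$ travels around the ``right'' (through $w$, $\beta$, $h(w)$). The contradiction is then extracted from the incompatibility of the two resulting distance formulas at the adjacent pair $y,x$, split according to the parity of $|\gamma|$ and according to which of the four corners of the cycle can be ``cut'' by a vertex $a\in\alpha$ with $v\sim a\sim v'$, etc.\ (nine cases A--I), each case resolved by iterated (TC)/(QC) applications combined with the prohibition of full $4$-cycles in a weakly systolic complex. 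Your sketch mentions none of this structure --- no cycle, no transition vertices, no parity split, no corner case analysis --- so the delicate verification you correctly identify as the crux (that the weakly modular common neighbour interacts correctly with $h$) is left entirely unproven, and the specific local-descent route you suggest for it does not go through.
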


\begin{proof}

The construction is similar to the one given in \cite{E2}, Proposition $3.3$ for systolic complexes.

Suppose the $1$-skeleton of $\rm{Min}_{X}(h)$ is not isometrically embedded. Then there exist
vertices $v, w \in \rm{Min}_{X}(h)$ such that no geodesic in $X$ with endpoints $v$ and $w$ is contained in
$\rm{Min}_{X}(h)$. Choose $v$ and $w$ so that $d(v,w)$ is minimal (clearly, $d(v,w) > 1$). Join $v$ with
$h(v)$, $w$ with $h(w)$ and $v$ with $w$ by geodesics $\alpha, \beta$ and $\gamma$, respectively. Then $h(v)$ is joined
with $h(w)$ by $h(\gamma)$. Note that $h(\gamma)$ is also a geodesic. We have $|\alpha| = |\beta| = |h|$, $|\gamma| = |h(\gamma)| > 1$.

According to Lemma \ref{3.1}, we have
$\alpha, \beta \subset \rm{Min}_{X}(h)$.
Then, by minimality of $d(v, w)$, geodesics $\alpha$ and $\gamma$ intersect only at the endpoints. The same holds for the geodesics $\alpha$ and $h(\gamma)$, $\beta$ and $\gamma$, $\beta$ and $h(\gamma)$, respectively.
Suppose there is a vertex $u \in \gamma \cap h(\gamma)$. Then $h(u) \in h(\gamma)$ and $h(u) \neq u$, since $h$ has no
fixed simplices. We may assume, not losing generality, that $h(v), u, h(u)$ and $h(w)$ lie on $h(\gamma)$
in this order. Then
$d(u,h(u)) = d(h(v),h(u)) - d(h(v),u) = d(v,u) - d(h(v),u) \leq d(v,h(v)) = |h|$.
So $u \in \rm{Min}_{X}(h)$, contradicting the fact that no geodesic in $X$ with endpoints $v$ and $w$ is contained in $\rm{Min}_{X}(h)$.
Thus either the geodesics $\alpha, \beta, \gamma, h(\gamma)$ are pairwise disjoint but the endpoints or $\alpha$ and
$\beta$ have nonempty intersection. In both situations we proceed as follows. 

 Let $v',w' \in \gamma, v' \sim v, w' \sim w$.
Let $y,x \in \gamma$ be adjacent vertices such that $d(x,v) = d(y,v) + 1$. It may happen that $y = v$ or $x = w$ but not simultaneously due to the fact that $d(v,w) > 1$. The vertex $y$ is chosen such that it is the last vertex on $\gamma$ with $d(y,h(y)) = d(y,v) + d(v,h(v)) + d(h(v),h(y))$ (i.e.,
$y$ is the last vertex on $\gamma$ to be joined with $h(y)$ by the left of the cycle $\gamma \star \beta \star h(\gamma) \star \alpha$). The vertex $x$ is chosen such that it is the first vertex on $\gamma$ with $d(x,h(x)) = d(x,w) + d(w,h(w)) + d(h(w),h(x))$ (i.e., $x$ is the first vertex on $\gamma$ to be joined with $h(x)$ by the right of the cycle $\gamma \star \beta \star h(\gamma) \star \alpha$). If there exists $a \in \alpha$ such that $v \sim a \sim v'$, the geodesic from $y$ to $h(y)$ contains the edge $\langle a,v'\rangle$ (not the edges $\langle a,v\rangle$, $\langle v,v'\rangle$). In such case we still use the notation $\gamma \star \beta \star h(\gamma) \star \alpha$ although the cycle has one missing corner. The same convention holds if the cycle $\gamma \star \beta \star h(\gamma) \star \alpha$ has a missing corner on the left and another on the right.

%If there exists $b \in \alpha$ such that $h(v) \sim b \sim %h(v')$, the geodesic from $y$ to $h(y)$ contains only the edge %$\langle b,h(v')\rangle$ (not the edges $\langle %b,h(v)\rangle$, $\langle h(v),h(v')\rangle$). 

Assume $|\gamma| = 2$. Then either $y=v$ or $x=w$. Let $x=w$. The other case can be treated similarly. Suppose there are no vertices $a \in \alpha$, $c \in \beta$ such that $a \sim y \sim c$ and there are no vertices $b \in \alpha$, $e \in \beta$ such that $b \sim h(y) \sim e$. Then $d(v,h(y)) = d(w,h(y)) = |h| + 1$. Since $d(v,w) = 2$, by (QC), there exists a vertex $s \sim v,w$ such that $d(s,h(y)) = |h|$. Due to weak systolicity of $X$, a full $4$-cycle $(v,y,w,s)$ is forbidden. Then $y \sim s$. Note that $d(y,h(y)) = d(y,s) + d(s,h(y)) = |h|+1$. Because $d(y,h(y)) = |h|+2$, this implies a contradiction. Suppose there exists a vertex $b \in \alpha$ with $h(v) \sim b \sim h(y)$, but there do not exist vertices $a \in \alpha$; $c,e \in \beta$ with $v \sim a \sim y, w \sim c \sim y$, $h(w) \sim e \sim h(y)$.
Note that $d(b,w)=d(h(y),w)=|h|+1$. Then, by (TC), there exists a vertex $q \sim b,h(y)$ such that $d(q,w)=|h|$. Note that $d(q,w)=d(h(w),w)=|h|$. If $q \sim h(w)$, by (TC), there exists a vertex $r \sim q,h(w)$ such that $d(r,w)=|h|-1$. If $d(q,h(w))=2$, by (QC), there exists a vertex $r \sim q,h(w)$ such that $d(r,w)=|h|-1$. A full $4$-cycle $(q,r,h(w),h(y))$ is forbidden. If $h(y) \sim r$, then $d(h(y),w) = d(h(y),r)+d(r,w) = |h|.$ This yields contradiction with $d(h(y),w)=|h|+1$. Therefore $q \sim h(w)$. Let $l \in \alpha, b \sim l$, $d(b,v)=d(l,v)+1$. Then $d(l,w)=d(q,w)=|h|$. If $l \sim q$, by (TC), there exists a vertex $p \sim l,q$ such that $d(p,w)=|h|-1$. If $d(l,q)=2$, by (QC), there exists a vertex $p \sim l,q$ such that $d(p,w)=|h|-1$. A full $4$-cycle $(l,p,q,b)$ is forbidden. If $l \sim q$, we have $d(v,h(w))= d(v,l)+d(l,q)+d(q,h(w))=|h|$. This implies contradiction with $d(v,h(w))=|h|+1$. If $p \sim b$, we get $d(b,w)=d(b,p)+d(p,w)=|h|$. This yields contradiction with $d(b,w)=|h|+1$. 
Suppose there exists a vertex $a \in \alpha$ such that $v \sim a \sim y$ and there exists a vertex $b \in \alpha$ such that $h(v) \sim b \sim h(y)$. Then $d(y,h(y))=|h|$ implying $y \in \rm{Min}_{X}(h)$. This yields a contradiction.
For the rest of the proof, let $|\gamma| \geq 3$.

If there exist vertices $a,b \in \alpha$ such that $v' \sim a \sim v$, $h(v') \sim b \sim h(v)$, then $d(v', h(v')) = |h|$. This implies that $v' \in \rm{Min}_{X}(h)$ contradicting the fact that no geodesic in $X$ with endpoints $v$ and $w$ is contained in $\rm{Min}_{X}(h)$. We distinguish the following cases:

$\bullet$ $d(v',h(v')) = d(w',h(w')) = |h|+2$ (Case A);

$\bullet$ $d(v',h(v')) = |h|+1$, $d(w',h(w')) = |h|+2$ (Case B, Case C); 

$\bullet$ $d(v',h(v')) = |h|+2$, $d(w',h(w')) = |h|+1$ (Case D, Case E); 

$\bullet$ $d(v',h(v')) = d(w',h(w')) = |h|+1$ (Case F, Case G, Case H, Case I). We treat these cases below and we obtain in each case a contradiction. 

Case A. There do not exist vertices $a, b \in \alpha$; $c, e \in \beta$ such that $v \sim a \sim v'$, $h(v) \sim b \sim h(v')$, $w \sim c \sim w'$, $h(w) \sim e \sim h(w')$, respectively. Then $d(v',h(v')) = d(w',h(w')) = |h|+2$. Let $z \in \gamma$ such that $z \sim y$, $d(y,v)=d(z,v)+1$ (possibly with $v=z$).

Case A.1. Let $|\gamma| = 2k, k \geq 1$.

If $d(v,x)=d(x,w)=k$, we have $d(y,h(y)) = 2k-2+|h|$, $d(x,h(x))=2k+|h|$. If $d(v,y)=d(y,w)=k$, we have $d(y,h(y)) = 2k+|h|$, $d(x,h(x))=2k-2+|h|$. We treat only the case $d(v,y) = d(y,w) = k$. The other case can be treated similarly.\

\begin{figure}[ht]
    \centering
    \includegraphics[width=0.4\textwidth]{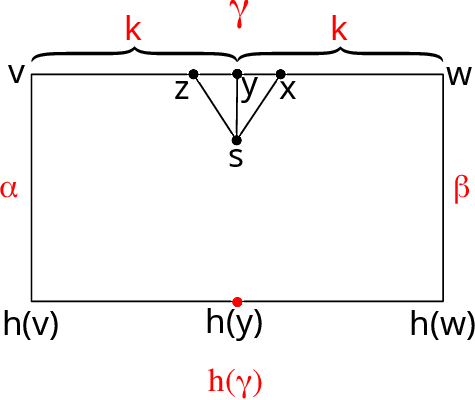}
    \caption{}
    %\label{Case A, $|\gamma| = 2k$}
\end{figure}

 Note that $d(x,h(y)) = d(z,h(y)) = 2k - 1 + |h|$. Then, since $d(z,x) = 2$, by (QC), there exists a vertex $s \sim x,z$ such that $d(s,h(y)) = 2k - 2 + |h|$. A full $4$-cycle $(z,y,x,s)$ is forbidden. Then $y \sim s$, and therefore $d(y,h(y)) = d(y,s) + d(s,h(y))= 2k - 1 + |h|$. This implies a contradiction with $d(y,h(y)) = 2k + |h|$.

Case A.2. Let $|\gamma| = 2k + 1, k \geq 1$.

Assume $d(v,y) = k+1$. Then $d(y,w) = k$. Note that $d(y,h(y)) = 2k+2+|h|$ by the left of the cycle $\gamma \star \beta \star h(\gamma) \star \alpha$, while $d(y,h(y)) = 2k+|h|$ by the right of the cycle $\gamma \star \beta \star h(\gamma) \star \alpha$. The point $y$ is chosen such that the geodesic from $y$ to $h(y)$ passes by the left of the cycle $\gamma \star \beta \star h(\gamma) \star \alpha$. The case $d(v,y) = k+1$ is therefore not possible. Hence $d(v,y)= k$.

Note that $d(x,h(x)) = d(z,h(x)) = 2k + |h|$. Then, since $d(z,x) = 2$, by (QC), there exists a vertex $s \sim x,z$ such that $d(s,h(x)) = 2k -1 + |h|$. A full $4$-cycle $(z,y,x,s)$ is forbidden.
Hence $y \sim s$. Hence $d(y,h(x))=d(y,s)+d(s,h(x))=2k+|h|$. This yields contradiction with $d(y,h(x))=2k+1+|h|$.

In conclusion $d(v',h(v')) \neq |h| + 2$. This completes case $A$.

Case $B.$ There exists a vertex $a \in \alpha$ such that $v \sim a \sim v'$.  There do not exist vertices $c \in \alpha$; $b,e \in \beta$ such that $h(v) \sim c \sim h(v')$, $w \sim b \sim w'$, $h(w) \sim e \sim h(w')$, respectively. Then $d(v',h(v')) = |h|+1$.

Case $B.1$. Let $|\gamma| = 2k, k \geq 1$.

 Suppose $d(v,x)=d(x,w)=k$. Note that $d(x,h(x))=2k-1+|h|$ by the left of the cycle $\gamma \ast \beta \ast h(\gamma) \ast \alpha$ while $d(x,h(x))=2k+|h|$ by the right of the cycle $\gamma \ast \beta \ast h(\gamma) \ast \alpha$. This yields contradiction with the fact that the vertex $x$ was chosen such that the geodesic from $x$ to $h(x)$ passes by the right of the cycle $\gamma \ast \beta \ast h(\gamma) \ast \alpha$. Therefore $d(v,y)=d(y,w)=k$. Let $z \in \gamma$ such that $z \sim y$, $d(y,v)=d(z,v)+1$ (possibly with $z = v$). Let $t \in \gamma$ such that $t \sim x$, $d(x,w)=d(t,w)+1$ (possibly with $t = w$).

Note that $d(y,h(y)) = d(x,h(y)) = 2k-1+|h|$. Then, by the (TC), there exists a vertex $s \sim x,y$ such that $d(s,h(y)) = 2k-2+|h|$. Note that $d(z,h(y)) = d(s,h(y)) = 2k-2+|h|$. If $d(z,s)=2$, by (QC), there exists a vertex $m \sim z,s$ such that $d(m,h(y))=2k-3+|h|$. A full $4$-cycle $(z,y,s,m)$ is forbidden. If $m \sim y$, we get $d(y,h(y))=d(y,m)+d(m,h(y))=2k-2+|h|$. This yields a contradiction with $d(y,h(y))=2k-1+|h|$. Then $z \sim s$. If $t \sim s$, we get $d(z,t)=2$ which yields contradiction with $\gamma$ being a geodesic. Hence $d(s,t)=2$. Note that $d(t,h(y)) = d(s,h(y)) = 2k-2+|h|$. By (QC), there exists a vertex $r \sim s,t$ such that $d(r,h(y))=2k-3+|h|$. The $4$-cycle $(x,s,r,t)$ is not allowed to be full. Hence $x \sim r$ and therefore $d(x,h(y)) = d(x,r) + d(r,h(y)) = 2k-2+|h|$. This yields contradiction with $d(x,h(y))=2k-1+|h|$.

Case $B.2$. Let $|\gamma| = 2k + 1, k \geq 1$. 

Suppose $d(v,y) = k+1$. Then $d(y,w) = k$. So $d(y,h(y)) = 2k + 1 + |h|$ by the left of the cycle $\gamma \star \beta \star h(\gamma) \star \alpha$, while $d(y,h(y)) = 2k + |h|$ by the right of the cycle $\gamma \star \beta \star h(\gamma) \star \alpha$. The point $y$ is chosen such that the geodesic from $y$ to $h(y)$ passes by the left of the cycle $\gamma \star \beta \star h(\gamma) \star \alpha$. So this case is not possible.
Then $d(v,y) = k, d(y,w) = k+1$. 

Let $z \in \gamma$ such that $y \sim z$, $d(y,v)=d(z,v)+1$ (possibly with $z=v$). Let $u \in \gamma$ such that $u \sim x$, $d(x,w)=d(u,w)+1$ (possibly with $u=w$). 
Note that $d(h(y),x) = d(h(x),x) = 2k + |h|$. Then (TC) implies that there exists a vertex $s \sim h(x),h(y)$ such that $d(s,x) = 2k - 1 + |h|$. 
 
Case $B.2.1$. Assume $s \sim h(z)$. 
If $h(u) \sim s$, we get contradiction with $h(\gamma)$ being a geodesic. Therefore $h(u) \nsim s$. Note that $d(s,x) = d(h(u),x) = 2k-1+|h|$. Since $d(h(u),s) = 2$, by (QC), there exists a vertex $t \sim s,h(u)$ such that $d(t,x) = 2k-2+|h|$. A full $4$-cycle $(s,h(x),h(u),t)$ is forbidden. So $t \sim h(x)$. Then $d(h(x),x) = d(h(x),t) + d(t,x) = 2k-1+|h|$. This implies contradiction with $d(h(x),x) =2k+|h|$.

\begin{figure}[ht]
    \centering
    \includegraphics[width=0.4\textwidth]{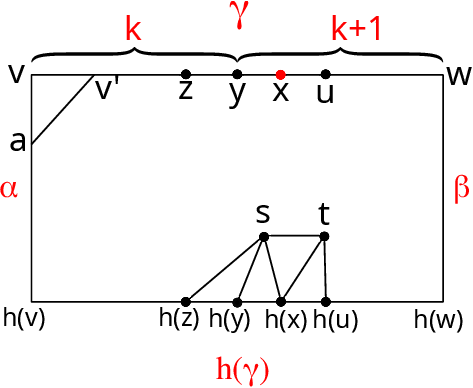}
    \caption{}
   % \label{Case B, $|\gamma| = 2k+1$, $h(z) \nsim s$}
\end{figure}

Case $B.2.2$. Assume $s \nsim h(z)$. Note that $d(s,x) = d(h(z),x) = 2k - 1 + |h|$. Since $d(s,h(z)) = 2$, by (QC), there exists a vertex $m \sim s,h(z)$ such that $d(m,x) = 2k - 2 + |h|$. A full $4$-cycle $(m,s,h(y),h(z))$ is forbidden. So $m \sim h(y)$. Then $d(h(y),x)=d(h(y),m)+d(m,x)=2k-1+|h|$. This implies contradiction with $d(h(y),x)=2k+|h|$.

\begin{figure}[ht]
    \centering
    \includegraphics[width=0.4\textwidth]{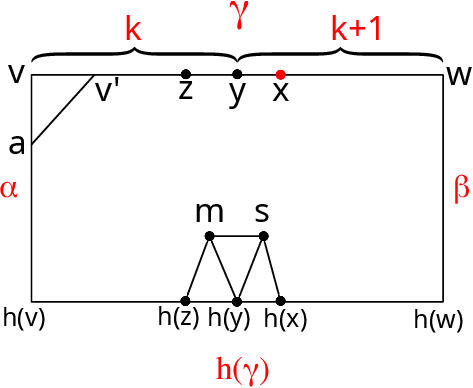}
    \caption{}
  %  \label{Case B, $|\gamma| = 2k+1$, $h(z) \sim s$}
\end{figure}

In conclusion we have $d(v',h(v')) \neq |h|+1$. This completes case $B.$

Case $C.$  There exists a vertex $b \in \alpha$ such that $h(v) \sim b \sim h(v')$. There do not exist vertices $a \in \alpha$; $c, e \in \beta$ such that $v \sim a \sim v'$, $w \sim c \sim w'$, $h(w) \sim e \sim h(w')$, respectively. This case can be treated similarly to case $B$ for $h(v)=v$, $h(w)=w$.

Case $D$. There exists a vertex $c \in \beta$ such that $w \sim c \sim w'$. There do not exist vertices $a,b \in \alpha$; $e \in \beta$ such that $v \sim a \sim v'$, $h(v) \sim b \sim h(v')$, $h(w) \sim e \sim h(w')$, respectively. This case is similar to case $B$ for $w=v$, $h(w)=h(v)$.

Case $E$. There exists a vertex $e \in \beta$ such that $h(w) \sim e \sim h(w')$. There do not exist vertices $a,b \in \alpha$; $c \in \beta$ such that $v \sim a \sim v'$, $h(v) \sim b \sim h(v')$, $w \sim c \sim w'$, respectively. This case is similar to case $B$ for $v=h(w)$, $w=h(v)$.

Case $F$. There exist vertices $a \in \alpha$, $c \in \beta$ such that $v \sim a \sim v'$, $w \sim c \sim w'$, respectively. There do not exist vertices $b \in \alpha$, $e \in \beta$ such that $h(v) \sim b \sim h(v')$, $h(w) \sim e \sim h(w')$, respectively. This case is similar to case $A$ for $v=v'$, $w=w'$, $h(v)=h(v')$, $h(w)=h(w')$.

Case $G$. There exist vertices $b \in \alpha$, $e \in \beta$ such that $h(v) \sim b \sim h(v')$, $h(w) \sim e \sim h(w')$, respectively. There do not exist vertices $a \in \alpha$, $c \in \beta$ such that $v \sim a \sim v'$, $w \sim c \sim w'$, respectively. This case is similar to case $A$ for $h(v)=h(v')$, $h(w)=h(w')$, $v=v'$, $w=w'$.

Case $H.$ There exist vertices $a \in \alpha$, $e \in \beta$ such that $v \sim a \sim v'$,  $h(w) \sim e \sim h(w')$, respectively. There do not exist vertices $c \in \beta$, $b \in \alpha$ such that $w \sim c \sim w'$, $h(v) \sim b \sim h(v')$, respectively.

Case $H.1$. Let $|\gamma| = 2k, k \geq 1$. 

If $d(v,y)=d(y,w)=k$, we get $d(y,h(y))=2k-1+|h|$, $d(x,h(x))=2k-3+|h|$. If $d(v,x)=d(x,w)=k$, we get $d(y,h(y))=2k-3+|h|$, $d(x,h(x))=2k-1+|h|$. 
We treat only the case $d(v,y)=d(y,w)=k$. The other case can be treated similarly. Let $z \in \gamma$ such that $z \sim y$, $d(y,v)=d(z,v)+1$ (possibly with $z=v$). Note that $d(z,h(y))=d(x,h(y))=2k-2+|h|$. Since $d(z,x)=2$, by (QC), there exists a vertex $s \sim x,z$ such that $d(s,h(y))=2k-3+|h|$. The $4$-cycle $(z,y,x,s)$ is not allowed to be full. Therefore $y \sim s$. Note that $d(y,h(y))= d(y,s) + d(s,h(y))=2k-2+|h|$. This yields contradiction with $d(y,h(y))=2k-1+|h|$.

%If $d(v,x)=d(x,w)=k$, let $u \in \gamma$ such that $x \sim u$ %%%(possibly with $u=w$). Note that $d(y,h(x))=d(u,h(x))=2k-%2+|h|$. By (TC), there exists $t \sim y,u$ such that %$d(t,h(x))=2k-3+|h|$. A full $4$-cycle $(y,x,u,t)$ is forbidden.
%If $y \sim u$, we get contradiction with the fact that $\gamma$ %is a geodesic. If $x \sim t$, we get $d(x,h(x))=d(x,t) + %d(t,h(x)) = 2k-2+|h|$. This yields contradiction with %$d(x,h(x))=2k-1+|h|$.

Case $H.2$. Let $|\gamma| = 2k+1, k \geq 1$. 

Note that the case $d(v,y)=k+1$ yields contradiction with the choice of the vertex $y$. Therefore $d(v,y)=k$.
Let $u \in \gamma$ such that $u \sim x$, $d(x,w)=d(u,w)+1$ (possibly with $u=w$). Note that $d(y,h(y)) = d(u,h(y)) = 2k-1+|h|$. Because $d(y,u)=2$, by (QC), there exists a vertex $s \sim y,u$ such that $d(s,h(y))=2k-2+|h|$. The $4-$cycle $(y,x,u,s)$ is not full. Therefore $x \sim s$. Then $d(x,h(y))=d(x,s)+d(s,h(y))=2k-1+|h|$. This yields contradiction with $d(x,h(y))=2k+|h|$.

Case $I.$ There exist vertices $c \in \beta$, $b \in \alpha$ such that $w \sim c \sim w'$; $h(v) \sim b \sim h(v')$, respectively. There do not exist vertices $a \in \alpha$, $e \in \beta$ such that $v \sim a \sim v'$; $h(w) \sim e \sim h(w')$, respectively. This case is similar to case $H$ for $v=w$, $h(v)=h(w)$.

\end{proof}

\begin{lemma}\label{3.3}
Let $h$ be a (simplicial) isometry with no fixed simplices of a simplicial complex $X$. Let $Y = \rm{Min}_{X}(h)$. Then $\rm{Min}_{X}(h) = \rm{Min}_{Y}(h)$.
\end{lemma}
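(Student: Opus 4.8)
The plan is to compare the intrinsic combinatorial metric $d_Y$ of $Y = \rm{Min}_X(h)$ with the ambient metric $d_X$, and to show that $h$ restricted to $Y$ displaces \emph{every} vertex of $Y$ by exactly the translation length $|h| = \min_{x \in X^{(0)}} d_X(h(x),x)$. Since $Y$ is $h$-invariant and $h$ is a simplicial automorphism, $h$ restricts to a simplicial isometry of $Y$ with no fixed simplices, so $\rm{Min}_Y(h)$ is defined. The inclusion $\rm{Min}_Y(h) \subseteq Y = \rm{Min}_X(h)$ is immediate (it is a subcomplex of $Y$), so the whole content lies in the reverse inclusion.

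First I would record the trivial metric inequality: because $Y$ is a full subcomplex of $X$, every edge-path in $Y$ is an edge-path in $X$, whence $d_Y(p,q) \ge d_X(p,q)$ for all $p,q \in Y^{(0)}$. In particular, for every vertex $v \in Y^{(0)}$ we have $d_Y(v,h(v)) \ge d_X(v,h(v)) = |h|$, the last equality holding since $v \in \rm{Min}_X(h)$.

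The key step is the reverse estimate $d_Y(v,h(v)) \le |h|$, for which I must produce a geodesic from $v$ to $h(v)$ lying inside $Y$. Fix a $1$-skeleton geodesic $\alpha = (v=\alpha_0,\alpha_1,\dots,\alpha_m=h(v))$ in $X$, so $m = d_X(v,h(v)) = |h|$. For an intermediate vertex $\alpha_j$, concatenating the tail of $\alpha$ from $\alpha_j$ to $\alpha_m = h(v) = h(\alpha_0)$ with the image path $h(\alpha)$ from $h(\alpha_0)$ to $h(\alpha_j)$ produces a path of length $(m-j)+j = m = |h|$ from $\alpha_j$ to $h(\alpha_j)$. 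Hence $d_X(\alpha_j,h(\alpha_j)) \le |h|$, and since $|h|$ is the minimal displacement this is an equality, so $\alpha_j \in \rm{Min}_X(h) = Y$. This is exactly the mechanism behind Lemma \ref{3.1}, and --- as the computation shows --- it uses only that $h$ is a simplicial isometry, not weak systolicity, so it is available in the present general setting. Because $Y$ is full, the edges of $\alpha$ joining its (now established) vertices of $Y$ also lie in $Y$, whence $\alpha \subset Y$ and $d_Y(v,h(v)) \le |\alpha| = |h|$.

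Combining the two estimates yields $d_Y(v,h(v)) = |h|$ for every $v \in Y^{(0)}$. Thus the displacement function of $h|_Y$ is constantly equal to $|h|$ on $Y$, so its minimum over $Y$ is attained at every vertex, i.e. $\rm{Min}_Y(h)^{(0)} = Y^{(0)}$. Taking spans inside $Y$ then gives $\rm{Min}_Y(h) = \langle Y^{(0)} \rangle = Y = \rm{Min}_X(h)$, as required. I do not expect a genuine obstacle here; the only point demanding care is keeping the two metrics $d_X$ and $d_Y$ distinct and verifying that the geodesic realizing $d_X(v,h(v))$ can be chosen --- indeed is forced --- to remain in $Y$, which the concatenation argument settles.
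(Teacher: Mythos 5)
Your proof is correct and follows essentially the same route as the paper's: both come down to showing that every vertex $v$ of $Y$ satisfies $d_{Y}(v,h(v)) = |h|$, so that the displacement function of $h$ restricted to $Y$ is constant and $\mathrm{Min}_{Y}(h) = Y = \mathrm{Min}_{X}(h)$. Your write-up is in fact more complete than the paper's two-line proof, which merely asserts $d_{Y}(y,h(y)) = |h|$ without justification; your concatenation argument (the mechanism of Lemma \ref{3.1}, which you correctly observe uses only that $h$ is a simplicial isometry --- important, since this lemma concerns an arbitrary simplicial complex, so weak systolicity is unavailable), combined with the remark that fullness of $Y$ forces the $X$-geodesic from $v$ to $h(v)$ to lie edge-by-edge in $Y$, is precisely the missing step.
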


\begin{proof}
Let $x \in X$ such that $d_{X}(x,h(x)) = |h|$. Then $x \in Y$. Let $y = h(x) \in Y$ such that $d_{Y}(y,h(y)) = |h|$. So $y \in \rm{Min}_{Y}(h)$ and therefore $Y \subset \rm{Min}_{Y}(h)$. Arguing similarly, we get $\rm{Min}_{Y}(h) \subset Y$. Hence $\rm{Min}_{X}(h) = \rm{Min}_{Y}(h)$.

\end{proof}

In the next theorem we show that the minimal displacement set in a weakly systolic complex is a systolic subcomplex.

\begin{theorem}\label{3.4}
Let $h$ be a (simplicial) isometry with no fixed simplices of a weakly systolic complex $X$. Then
the subcomplex $\rm{Min}_{X}(h)$ is systolic.
\end{theorem}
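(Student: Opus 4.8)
The plan is to check the defining conditions of a systolic complex for $Y := \rm{Min}_{X}(h)$ one by one: connected, simply connected, and locally $6$-large. Two tools make this manageable. By Lemma \ref{3.3} I may treat $Y$ as its own minimal displacement set, $Y = \rm{Min}_{Y}(h)$, so that the minimality of the displacement function can be invoked for vertices of $Y$ without leaving $Y$. By Theorem \ref{3.2} the $1$-skeleton of $Y$ is isometrically embedded in $X$, so I may compute combinatorial distances interchangeably in $Y$ and in $X$.

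The local conditions on $4$-cycles are immediate. As a full subcomplex of a flag complex, $Y$ is flag. Any full $4$-cycle of $Y$, or of a link $Y_{\sigma}$, is by fullness a full $4$-cycle of $X$; since $X$ is weakly systolic it has none, and hence neither does $Y$. In particular no link of $Y$ carries a full $4$-cycle.

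Next I would establish that $Y$ is weakly systolic by verifying that $Y^{(1)}$ is weakly modular; together with the absence of full $4$-cycles this yields weak systolicity through the characterization of weakly systolic complexes recalled in Section~2, and in particular it yields that $Y$ is connected and simply connected, the two global hypotheses needed for systolicity. The content here is that the common neighbour $x$ furnished by the triangle condition, respectively the quadrangle condition, inside $X$ in fact lies in $Y$. This one checks by bounding its displacement: using the $h$-invariance of $Y$ together with displacement estimates of the same type as in Lemma \ref{3.1} and in the proof of Theorem \ref{3.2}, one shows $d(x,h(x)) = |h|$, which forces $x \in Y$.

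It remains to upgrade weak systolicity to systolicity, that is, to exclude full $5$-cycles from the links of $Y$; this is the heart of the argument and the step I expect to be the main obstacle. Indeed, a full $5$-cycle in a link $Y_{w}$ is exactly a full $5$-wheel $(w; x_{1},\dots ,x_{5})$, the one local configuration that weakly systolic complexes tolerate but systolic ones forbid, so nothing established so far rules it out. Assuming such a wheel inside $Y$, I would apply $h$ to it and follow the distances from its vertices to their $h$-images along the axis $\gamma$ of Lemma \ref{3.1}; by repeated use of the triangle and quadrangle conditions, and of the prohibition on full $4$-cycles, I would extract either a vertex whose displacement is strictly smaller than $|h|$, which is impossible since $Y = \rm{Min}_{Y}(h)$, or a forbidden full $4$-cycle. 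This is the same mechanism as the long case analysis in the proof of Theorem \ref{3.2}, now carried out locally around the wheel. Once every link of $Y$ is free of full $4$-cycles and full $5$-cycles, $Y$ is locally $6$-large, and with simple connectivity this shows that $Y$ is systolic.
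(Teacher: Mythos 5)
The decisive step of your plan --- excluding full $5$-cycles from the links of $Y=\mathrm{Min}_X(h)$ --- is precisely where it stops being a proof, and the mechanism you propose for it cannot work as described. A full $5$-wheel is a configuration that weakly systolic complexes \emph{tolerate}: it is compatible with the triangle condition, the quadrangle condition, the absence of full $4$-cycles, and with minimality of the displacement function, so ``repeated use of (TC) and (QC) and the prohibition on full $4$-cycles,'' patterned on the case analysis of Theorem \ref{3.2}, cannot by itself be driven to a vertex of displacement less than $|h|$ or to a forbidden full $4$-cycle. The one axiom that kills $5$-wheels is the $\widehat{W}_5$-condition (equivalently, the $SD_n$ property), and your sketch never invokes it at this step. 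The paper's proof hinges on exactly that: given a full $5$-wheel $(z;x_1,\dots,x_5)$ in $Y$, it transports the wheel by $h$, uses $d(h(x_i),z)=|h|-1$ for $i=1,2$ and one application of (TC) to produce an apex $a\sim h(x_1),h(x_2)$ with $d(a,z)=|h|-2$; it then verifies, via the no-full-$4$-cycle condition, that $a$ is adjacent to no other vertex of the wheel $(h(z);h(x_1),\dots,h(x_5))$, and that $d(z,a)+d(a,h(z))=|h|$ places $a$ on a geodesic from $z$ to $h(z)$, hence inside $\mathrm{Min}_X(h)$ by Lemma \ref{3.1}. This exhibits an extended $5$-wheel $\widehat{W}_5=(h(z);h(x_1),\dots,h(x_5);a)$ inside $\mathrm{Min}_X(h)$, and the $\widehat{W}_5$-condition of the ambient complex $X$ then supplies a vertex $v$ adjacent to all of it, forcing $d(a,h(x_4))=2$ against the fullness of the configuration. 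Without this construction --- apex via (TC), membership of the apex in $\mathrm{Min}_X(h)$ via Lemma \ref{3.1}, and the extended-wheel axiom --- your step is a statement of intent, not an argument.

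A secondary divergence: the paper does not prove that $Y^{(1)}$ is weakly modular, and your route to connectivity and simple connectivity through weak modularity is itself only gestured at. In particular, your claim that the common neighbour furnished by (TC)/(QC) in $X$ satisfies $d(x,h(x))=|h|$ is not an estimate ``of the same type as Lemma \ref{3.1}'': that lemma only controls vertices on the concatenated axis $\gamma$, not arbitrary vertices produced by the weak modularity conditions, so this would need a genuine new argument. Your preliminary observations (fullness of $Y$, flagness, inheritance of the no-full-$4$-cycle condition) are correct and consistent with what the paper uses, but the heart of the theorem --- the extended $5$-wheel argument --- is missing.
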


\begin{proof}
Let $Y = \rm{Min}_{X}(h)$.
Lemma \ref{3.3} implies that $Y = \rm{Min}_{Y}(h)$. 

The proof is by contradiction. Suppose $\rm{Min}_{X}(h)$ is not systolic.
Then, for $z \in \rm{Min}_{X}(h)$, 
there exists a full $5$-cycle $(x_{1}, ..., x_{5}) \subset [\rm{Min}_{X}(h)]_{z}$. According to Lemma \ref{3.3}, there exist vertices $y_{i} \in \rm{Min}_{X}(h)$ such that $h(x_{i})=y_{i}, d(h(x_{i}),x_{i}) = |h|, 1 \leq i \leq 5$. We have $d(h(z),z)= |h|$. Note that $(h(z);h(x_{1}), ..., h(x_{5})) \subset \rm{Min}_{X}(h)$.

Since $x_{i} \sim x_{i+1}$, $1 \leq i \leq 4$, $z \sim x_{i}$, $1 \leq i \leq 5$ and since $h$ is an isometry, we have $h(x_{i}) \sim h(x_{i+1})$, $1 \leq i \leq 4$, $h(z) \sim h(x_{i})$, $1 \leq i \leq 5$. Let $\gamma_{i}$ be a geodesic joining $x_{i}$ to $h(x_{i})$, $1 \leq i \leq 2$.
 Let $\gamma_{3}$ be a geodesic joining $z$ to $h(z)$.  Lemma \ref{3.1} implies that $\gamma_{i} \subset \rm{Min}_{X}(h)$, $1 \leq i \leq 3$.
 Note that \begin{center}$d(h(x_{i}),z)=|\gamma_{i}| - d(x_{i},z)=d(h(x_{i}),x_{i}) - d(x_{i},z) =|h|-1, 1 \leq i \leq 2$.\end{center} Then, by (TC), there exists a vertex $a \sim h(x_{1}), h(x_{2})$ such that $d(a,z)=|h|-2$.

%Let $\gamma_{i}$ be a geodesic from $x_{i}$ to $h(x_{i}), 1 %\leq i \leq 2$. Due to Lemma \ref{3.1}, $\gamma_{i} \subset %Min_{X}(h), 1 \leq i \leq 2$. 

\begin{figure}[ht]
   \centering
    \includegraphics[width=0.7\textwidth]{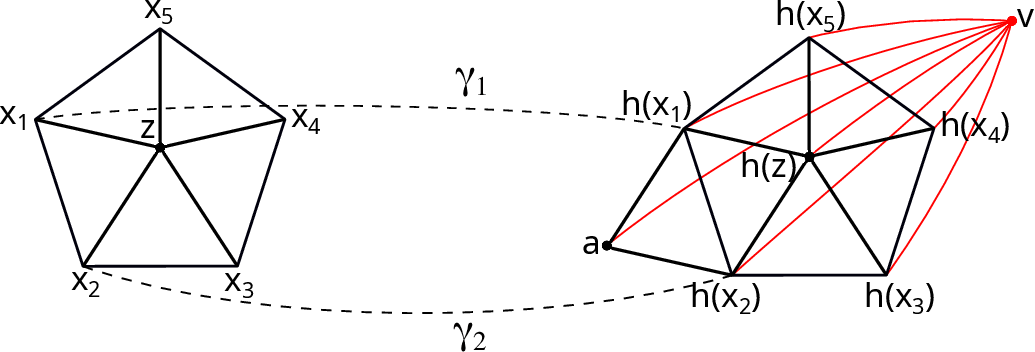}
  %  \caption{}
%    \label{Min_{X}(h) is systolic}
\end{figure}

We show that $\widehat{W}_{5} = (h(z);h(x_{1}), ..., h(x_{5});a)$ is an extended $5$-wheel belonging to $\rm{Min}_{X}(h)$.

$\bullet$ Suppose $h(x_{1}) \sim h(x_{3})$. Then $d(x_{1},x_{3}) = d(h(x_{1}),h(x_{3})) = 1$. On the other hand, since $(x_{1}, ..., x_{5})$ is a full cycle, $d(x_{1},x_{3})=2$. This implies a contradiction. Hence $h(x_{1}) \nsim h(x_{3})$. Arguing similarly, it follows that $(h(x_{1}), ..., h(x_{5}))$ is a full cycle.

$\bullet$ Suppose $a \sim h(z)$. Then $d(z,h(z))=d(z,a)+d(a,h(z))=|h|-1$. This yields contradiction with $d(z,h(z))=|h|$ which holds since $z \in \rm{Min}_{X}(h)$.

$\bullet$ Suppose $a \sim h(x_{3})$. The $4$-cycle $(a,h(x_{3}),h(z),h(x_{1}))$ is not allowed to be full. If either $a \sim h(z)$ or $h(x_{1}) \sim h(x_{3})$, we get a contradiction as shown above. 

$\bullet$ Suppose $a \sim h(x_{4})$. The $4$-cycle $(a,h(x_{4}),h(x_{3}),h(x_{2}))$ is not allowed to be full. If $a \sim h(x_{3})$, we get contradiction as shown above. If $h(x_{2}) \sim h(x_{4})$, we get contradiction since $(h(x_{1}), ..., h(x_{5}))$ is a full cycle.

Since $a \nsim h(z)$, we get $d(a,h(z))=d(a,h(x_{1})) + d(h(x_{1}),h(z)) = 2$. Therefore $d(z,h(z))=|h|=|h| - 2 + 2 = d(z,a) + d(a,h(z))$. Hence $a \in \gamma_{3}$. Since $\gamma_{3} \subset \rm{Min}_{X}(h)$, we have $a \in \rm{Min}_{X}(h)$.
In conclusion $\widehat{W}_{5}$ is an extended $5$-wheel belonging to $\rm{Min}_{X}(h)$. 

Because $X$ is weakly systolic, there exists $v \in X^{(0)}$ such that $\widehat{W}_{5} \subset X_{v}$. Hence $d(a,h(x_{4})) = d(a,v)+d(v,h(x_{4})) = 2$. On the other hand, since $\widehat{W}_{5} \subset \rm{Min}_{X}(h)$, we have $d(a,h(x_{4})) = d(a,h(x_{2})) + d(h(x_{2}),h(x_{3})) + d(h(x_{3}),h(x_{4}))= 3$. This yields a contradiction. So, for $z \in \rm{Min}_{X}(h)$, there exists no full $5$-cycle $(x_{1}, ..., x_{5}) \subset [\rm{Min}_{X}(h)]_{z}$. Hence $\rm{Min}_{X}(h)$ is systolic.

\end{proof}

%\begin{theorem}\label{3.5}
%Let $h$ be a (simplicial) isometry of a weakly systolic complex $X$ %having no fix-points such that
%the set $\rm{Min}(h) \subset X$ is isometrically embedded and it %satisfies the extended $5$-wheel condition (i.e. $(C_{5},W_{5})$, %$\hat{W}_{5}$-condition). Then $\rm{Min}(h) \subset X$ is weakly %systolic.
%\end{theorem}

%\begin{proof}

%There are no $5$-wheels by hypothesis. There are no full $4$-cycles %by weak systolicity. 

%\end{proof}

The following results on weakly systolic complexes are immediate consequences of the fact that the minimal displacement set of a hyperbolic isometry acting on such complex is a systolic subcomplex that embeds isometrically into the complex. Their systolic analogues, also given below, imply these consequences.

\begin{theorem}\label{3.5}
Let $h$ be a hyperbolic simplicial isometry of a uniformly
locally finite systolic complex $X$. Then in $X$ there is an $h^{n}$-invariant geodesic for some $n \geq 1$.
\end{theorem}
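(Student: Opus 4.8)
The plan is to move the whole question inside the minimal displacement set, manufacture a bi-infinite quasi-axis there, upgrade it to an honest geodesic line by a compactness argument, and finally force genuine $h^{n}$-invariance by a pigeonhole argument that uses uniform local finiteness.

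\emph{Step 1: reduction to $Y=\mathrm{Min}_{X}(h)$.} By the systolic analogues of Theorems \ref{3.2} and \ref{3.4} (namely \cite{E2}, Propositions $3.3$ and $3.4$), the subcomplex $Y=\mathrm{Min}_{X}(h)$ is systolic and isometrically embedded in $X$, and by Lemma \ref{3.3} we have $Y=\mathrm{Min}_{Y}(h)$. Thus $h$ restricts to a hyperbolic isometry of $Y$ for which \emph{every} vertex realizes the displacement $|h|$, and since the embedding $Y\hookrightarrow X$ is isometric, any $1$-skeleton geodesic line of $Y$ is a geodesic of $X$. Hence it suffices to produce an $h^{n}$-invariant geodesic line inside $Y$.

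\emph{Step 2: a bi-infinite quasi-axis.} Fix a vertex $v\in Y$ and a geodesic $\alpha\subset Y^{(1)}$ from $v$ to $h(v)$. By Lemma \ref{3.1} the concatenation $\gamma$ of the translates $h^{m}(\alpha)$, $m\in\mathbf{Z}$, is a bi-infinite $|h|$-geodesic contained in $Y$ on which $h$ acts by the shift $t\mapsto t+|h|$. Since $\gamma$ is geodesic on every subsegment of length at most $|h|$ and is invariant under the translation $h$, it is a quasigeodesic; its only defect is that it need not be geodesic over arbitrarily large distances.

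\emph{Step 3: upgrading to an honest geodesic line.} For each $N\ge 1$ choose a genuine $1$-skeleton geodesic $L_{N}$ in $Y$ from $h^{-N}(v)$ to $h^{N}(v)$. Because these endpoints lie on the quasi-axis $\gamma$ and $d_{h}\equiv|h|$ on $Y$, the segments $L_{N}$ stay within a uniformly bounded distance of $\gamma$. Uniform local finiteness of $X$ makes every ball of $Y$ finite, so a diagonal (K\"onig's lemma) argument extracts a subsequence of the $L_{N}$ converging on compact sets to a bi-infinite geodesic line $L\subset Y$ lying within bounded Hausdorff distance of $\gamma$.

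\emph{Step 4: genuine $h^{n}$-invariance (the crux).} The image $h(L)$ is again a geodesic line within bounded Hausdorff distance of $\gamma$, hence of $L$. The heart of the argument is to bound the number of geodesic lines lying within a fixed Hausdorff distance of $L$ and meeting a fixed fundamental region: here one combines uniform local finiteness with the systolic (as opposed to merely weakly systolic) structure of $Y$, which controls how far two geodesics tracking the same quasi-axis can diverge. Granting this finiteness, the translates $\{h^{k}(L)\}_{k\in\mathbf{Z}}$ form a finite $h$-orbit of geodesic lines, so some power $h^{n}$ stabilizes a single line $L'$ setwise, and $L'$ is the desired $h^{n}$-invariant geodesic. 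I expect Step $4$ — establishing the finiteness of parallel geodesics and ruling out a genuinely \emph{fat} limit admitting no thin invariant axis — to be the main obstacle; it is precisely the place where systolicity and uniform local finiteness are indispensable, in contrast to the weakly systolic case, where one can only guarantee a stabilized \emph{thick} geodesic $A_{k}$.
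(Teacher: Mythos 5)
Note first that the paper does not actually prove Theorem \ref{3.5}: it is quoted verbatim from Elsner, and the ``proof'' is the citation \cite{E2} (Theorem $3.5$, p.\ $46$). So your attempt has to be measured against Elsner's argument. Your Steps $1$--$2$ are sound and match the standard setup (pass to $\mathrm{Min}_X(h)$ via the analogues of Theorems \ref{3.2}, \ref{3.4} and Lemma \ref{3.3}, and build the $|h|$-geodesic $\gamma$ of Lemma \ref{3.1}). But Steps $3$ and $4$ each contain a genuine gap, and in both cases the statement you rely on is actually \emph{false}, as the equilaterally triangulated Euclidean plane shows; this complex is systolic and uniformly locally finite but contains flats, so no Morse-type tracking of quasi-axes by geodesics is available. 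Concretely, take vertex set $\mathbf{Z}^2$ with edges spanned by $(1,0)$, $(0,1)$, $(1,1)$ (so $d((0,0),(a,b))=\max(a,b)$ for $a,b\geq 0$), and let $h$ be the translation by $(2,1)$; then $|h|=2$, $\mathrm{Min}_X(h)=X$, and $\gamma$ may be taken to be the genuine geodesic through the points $(2k,k)$. Against Step $3$: the path from $h^{-N}(v)=(-2N,-N)$ consisting of $2N$ diagonal steps $(1,1)$ (reaching $(0,N)$) followed by $2N$ horizontal steps $(1,0)$ is a geodesic from $h^{-N}(v)$ to $h^{N}(v)$ of length $4N=d(h^{-N}(v),h^{N}(v))$, yet every vertex on it lies at distance at least $N$ from $v$, and the vertex $(0,N)$ lies at distance at least $N$ from $\gamma$. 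So ``the segments $L_N$ stay within a uniformly bounded distance of $\gamma$'' fails, and worse, with such choices the $L_N$ eventually avoid every fixed ball, so the K\"onig's lemma extraction cannot even begin without a new idea (a canonical choice of geodesics, e.g.\ the directed geodesics of Januszkiewicz--{\'S}wi{\c{a}}tkowski, or a proof that \emph{some} geodesic between the endpoints passes uniformly close to $v$ --- this is where Elsner does real work).

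Against Step $4$: in the same example, every bi-infinite path using only the steps $(1,0)$ and $(1,1)$ with bounded discrepancy between their frequencies is a geodesic line lying within bounded Hausdorff distance of the axis, and infinitely many pairwise distinct such lines pass through any fixed vertex. Hence the finiteness you explicitly ``grant'' --- finitely many geodesic lines within fixed Hausdorff distance of $L$ meeting a fixed fundamental region --- is false, and the pigeonhole producing $h^{n}$-invariance collapses. Note also that what you would actually need is finiteness of the orbit $\{h^{k}(L)\}$, and if the limit line $L$ is not itself periodic these translates are pairwise distinct; so finiteness of the orbit is precisely the conclusion to be proved, not a tool. (A smaller unproved point: in Step $2$ the assertion that the $h$-periodic $|h|$-geodesic $\gamma$ is a global quasigeodesic, i.e.\ that $d(v,h^{k}(v))$ grows linearly, also requires an argument in this non-hyperbolic setting.) In short, your outline reproduces the easy reduction that the paper and \cite{E2} share, but the two steps you yourself flag as the crux are exactly the ones that fail as stated, so the proposal is an outline with the essential content missing rather than a proof.
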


For the proof see \cite{E2}, Theorem $3.5$, page $46$.

\begin{corollary}\label{3.6}
Let $h$ be a hyperbolic simplicial isometry of a uniformly
locally finite weakly systolic complex $X$. Then in $X$ there is an $h^{n}$-invariant geodesic for some $n \geq 1$.
\end{corollary}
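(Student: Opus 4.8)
The plan is to reduce Corollary~\ref{3.6} to its systolic counterpart, Theorem~\ref{3.5}, via the two main theorems already established in this section. The key observation is that Theorem~\ref{3.5} is stated for \emph{systolic} complexes that are uniformly locally finite, while Corollary~\ref{3.6} concerns \emph{weakly} systolic complexes; the bridge between the two worlds is precisely the subcomplex $\mathrm{Min}_X(h)$, which Theorem~\ref{3.4} shows to be systolic and Theorem~\ref{3.2} shows to be isometrically embedded in $X$.

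First I would set $Y = \mathrm{Min}_X(h)$. Since $h$ is hyperbolic it has no fixed simplices, so Theorem~\ref{3.4} applies and $Y$ is a systolic complex; moreover $Y$ is invariant under $h$ (as noted right after the definition of the minimal displacement set), so $h$ restricts to a simplicial isometry $h|_Y$ of $Y$. I would next check that $h|_Y$ is again hyperbolic: by Lemma~\ref{3.3} we have $\mathrm{Min}_X(h) = \mathrm{Min}_Y(h)$, so the translation length of $h$ on $Y$ equals $|h| > 0$, and $h|_Y$ fixes no simplex of $Y$. One also needs that $Y$ is uniformly locally finite; this follows because $Y$ is a full subcomplex of the uniformly locally finite complex $X$ (a vertex of $Y$ has in $Y$ at most as many neighbours as it has in $X$).

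With these verifications in place, Theorem~\ref{3.5} applies directly to the systolic complex $Y$ and the hyperbolic isometry $h|_Y$, yielding an integer $n \geq 1$ and a geodesic $g \subset Y^{(1)}$ that is invariant under $h^{n}$. The final step is to promote $g$ from a geodesic of $Y$ to a geodesic of $X$: since Theorem~\ref{3.2} guarantees that $Y^{(1)}$ is isometrically embedded in $X^{(1)}$, distances in $Y$ and in $X$ agree along $g$, so $g$ is also a geodesic in $X$. Because $Y$ is $h$-invariant and $g$ is $h^{n}$-invariant inside $Y$, the path $g$ remains $h^{n}$-invariant as a subset of $X$, completing the proof.

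The main obstacle I anticipate is not the logical skeleton, which is a clean reduction, but the bookkeeping needed to justify that the restricted isometry still satisfies every hypothesis of Theorem~\ref{3.5}: specifically the transfer of uniform local finiteness to $Y$ and the verification via Lemma~\ref{3.3} that hyperbolicity is preserved under restriction. The isometric-embedding step from Theorem~\ref{3.2} is what makes the final transfer of $g$ back to $X$ valid, and it is worth stating explicitly that without it a geodesic of $Y$ need not remain a geodesic of $X$.
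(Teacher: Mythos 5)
Your proposal is correct and follows essentially the same route as the paper: reduce to the systolic case by taking $Y = \mathrm{Min}_X(h)$, apply Theorem~\ref{3.4} to get systolicity, invoke Theorem~\ref{3.5} on $Y$, and transfer the $h^{n}$-invariant geodesic back to $X$ via the isometric embedding of Theorem~\ref{3.2}. In fact you are somewhat more careful than the paper, which silently omits the verifications you supply (that $Y$ inherits uniform local finiteness as a full subcomplex and that the restriction $h|_Y$ is still hyperbolic via Lemma~\ref{3.3}).
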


\begin{proof}

Let $Y = \rm{Min}_{X} (h)$. Theorem \ref{3.4} implies that $Y$ is systolic. Then, by Theorem \ref{3.5}, in $Y$ there is an $h^{n}$-invariant geodesic $\gamma$ for some $n \geq 1$. Since, by Theorem \ref{3.2}, $Y^{(1)}$ is isometrically embedded into $X$, the $h^{n}$-invariant geodesic $\gamma$ in $Y$, also belongs to $X$. This completes the proof.

\end{proof}

\begin{theorem}\label{3.7}
Let $h$ be a simplicial isometry of a uniformly locally finite
systolic complex $X$. Then in $X$ either there is an $h$-invariant simplex (elliptic case)
or there is an $h$-invariant thick geodesic (hyperbolic case).
\end{theorem}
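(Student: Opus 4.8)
The dichotomy itself is essentially built into the definitions: by definition $h$ is elliptic precisely when it fixes some simplex $\sigma$ of $X$, and such a $\sigma$ is already an $h$-invariant simplex. Thus the elliptic alternative holds trivially, and the whole content of the theorem lies in the complementary case. So I would assume from now on that $h$ has no fixed simplices, i.e. that $h$ is hyperbolic with $|h| \geq 1$, and aim to produce an $h$-invariant thick geodesic.

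First I would pass to $Y = \rm{Min}_{X}(h)$ and record its structural properties. By the minimal displacement theorem for systolic complexes (\cite{E2}, Propositions $3.3$ and $3.4$, quoted above), $Y$ is a systolic complex whose $1$-skeleton embeds isometrically in $X$. As a subcomplex of the uniformly locally finite complex $X$, the complex $Y$ is itself uniformly locally finite. Finally $Y$ is $h$-invariant, $h$ restricted to $Y$ still has no fixed simplices, and by Lemma \ref{3.3} we have $Y = \rm{Min}_{Y}(h)$; hence $h|_{Y}$ is hyperbolic and every vertex of $Y$ realizes the displacement $|h|$.

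Next I would apply Theorem \ref{3.5} to the uniformly locally finite systolic complex $Y$, obtaining an $h^{n}$-invariant geodesic $\gamma \subset Y$ for some integer $n \geq 1$; since $Y^{(1)}$ is isometrically embedded in $X$, the line $\gamma$ remains a geodesic of $X$. This is the step where uniform local finiteness does the real work, since it is what permits a genuine invariant geodesic to be extracted, by a compactness argument, from the $\langle h \rangle$-orbit of a vertex, which a priori is only a quasi-geodesic.

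It then remains to upgrade $h^{n}$-invariance to $h$-invariance and to thicken $\gamma$ into an $A_{k}$. The isometry $h$ cyclically permutes the translates $\gamma, h(\gamma), \dots, h^{n-1}(\gamma)$, each of which is an $h^{n}$-invariant geodesic contained in $Y = \rm{Min}_{Y}(h)$, so their union $T = \bigcup_{i=0}^{n-1} h^{i}(\gamma)$ is $h$-invariant. Any two of these geodesics are parallel, with Hausdorff distance at most $|h|$, because corresponding points are joined by minimal displacement segments; in the systolic complex $Y$ such a family of parallel geodesics spans a flat strip, and uniform local finiteness bounds its thickness, fixing a finite $k$. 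The main obstacle, as I see it, is precisely the verification that the full subcomplex of $Y$ spanned by $T$ is isomorphic to $A_{k}$ and satisfies the defining condition $a - a' = jk \Rightarrow d_{X}(a,a') = d_{A_{k}}(a,a')$ of a thick geodesic. I would establish this by reindexing the interleaved vertices of the $n$ parallel geodesics by $\mathbf{Z}$, using the flat strip structure to identify which vertex sets span simplices and the isometric embedding of $Y$ in $X$ to transport the $k$-lattice isometry from $Y$ to $X$. The resulting copy of $A_{k}$ is $h$-invariant by construction, which settles the hyperbolic case and completes the proof.
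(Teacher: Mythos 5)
First, a point of comparison: the paper contains no proof of Theorem \ref{3.7} at all --- it is quoted from Elsner (\cite{E2}, Theorem $3.8$) and used as a black box to derive Corollary \ref{3.8}. So your proposal has to stand on its own as a proof of Elsner's theorem. Its first half does: under the paper's definitions the elliptic case is indeed trivial (elliptic means precisely that some simplex is fixed, and a fixed simplex is $h$-invariant), and the reduction to $Y=\mathrm{Min}_{X}(h)$ --- systolic, isometrically embedded, uniformly locally finite, $h$-invariant, with an $h^{n}$-invariant geodesic $\gamma$ supplied by Theorem \ref{3.5} --- is correct and matches the scaffolding the paper itself assembles (Theorems \ref{3.2} and \ref{3.4}, Lemma \ref{3.3}).

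The genuine gap is the final ``thickening'' step, which is where the entire content of the theorem lies. You justify it by asserting that in a systolic complex a family of parallel geodesics ``spans a flat strip,'' but no flat strip theorem of this kind is available in the systolic setting; indeed the very reason thick geodesics were introduced is that a hyperbolic simplicial isometry need not preserve any genuine geodesic or flat strip --- only a power $h^{n}$ does --- so the invariant object for $h$ itself must be a coarser line, and its existence cannot be obtained by citing a CAT(0)-style strip. Moreover, your concrete candidate fails as stated: the full subcomplex spanned by $T=\bigcup_{i=0}^{n-1}h^{i}(\gamma)$ has vertex set exactly the union of the translates, and for it to be isomorphic to some $A_{k}$ every block of $k+1$ consecutive interleaved vertices would have to be pairwise adjacent; bounded Hausdorff distance between translates (which is in general only $\leq (n-1)|h|$, not $\leq |h|$ as you claim) yields no adjacency whatsoever, and if two consecutive translates are at Hausdorff distance $\geq 2$ the span contains no edge joining them, so it is not any $A_{k}$. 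A correct construction must interpolate additional vertices between the translates and then verify the defining condition $a-a'=jk\Rightarrow d_{X}(a,a')=d_{A_{k}}(a,a')$; you explicitly label exactly this verification ``the main obstacle'' and defer it, so what is deferred is not a routine check but the theorem itself.
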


For the proof see \cite{E2}, Theorem $3.8$, page $49$.

\begin{corollary}\label{3.8}
Let $h$ be a simplicial isometry of a uniformly locally finite weakly systolic complex $X$. Then in $X$ either there is an $h$-invariant simplex (elliptic case)
or there is an $h$-invariant thick geodesic (hyperbolic case).
\end{corollary}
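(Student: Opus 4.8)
The goal is Corollary~\ref{3.8}, the dichotomy for an arbitrary simplicial isometry $h$ of a uniformly locally finite weakly systolic complex $X$. The plan is to reduce the statement to the two cases already isolated by the definition of hyperbolic versus elliptic, and then to invoke the hyperbolic machinery established in Corollary~\ref{3.6}. First I would observe that by the definition given in the Minimal displacement set subsection, $h$ is either elliptic (it fixes some simplex of $X$) or hyperbolic (it has no fixed simplices, equivalently $|h|>0$). These two possibilities are mutually exclusive and exhaustive, so it suffices to produce the asserted invariant object in each case.

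In the elliptic case there is nothing further to prove: by definition $h$ fixes a simplex $\sigma$ of $X$, and a simplicial isometry fixing $\sigma$ fixes $\sigma$ setwise, so $\sigma$ is itself the desired $h$-invariant simplex. (If one wants an honest fixed point rather than a fixed-setwise simplex, one notes that $h$ then fixes the barycentre of $\sigma$, which is the formulation used in the abstract.) This case requires no weak systolicity and no local finiteness.

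The substantive case is the hyperbolic one, and here the whole point is that the work has already been done. When $h$ has no fixed simplices it is hyperbolic, so Corollary~\ref{3.6} applies directly and yields an $h^{n}$-invariant geodesic in $X$ for some $n\ge 1$. To upgrade this to an $h$-invariant thick geodesic one argues as in the systolic proof cited for Theorem~\ref{3.7}: the set $\mathrm{Min}_{X}(h)$ is systolic by Theorem~\ref{3.4} and isometrically embedded by Theorem~\ref{3.2}, so Lemma~\ref{3.1} already provides a bi-infinite $|h|$-geodesic $\gamma$ on which $h$ acts by translation, and the $A_{k}$ whose axis is $\mathrm{Im}(\gamma)$ is the thick geodesic; since $h$ translates $\gamma$ it stabilizes this $A_{k}$ as a set, giving the $h$-invariant thick geodesic. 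The structural input that makes this go through is exactly the transfer of the systolic result via the isometric embedding, which is the content of Corollary~\ref{3.6}.

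The only step that demands any care is verifying that the thick geodesic obtained inside the systolic subcomplex $\mathrm{Min}_{X}(h)$ remains a thick geodesic in $X$, i.e. that the metric equality $d_{X}(a,a')=d_{A_{k}}(a,a')$ on multiples of $k$ is not destroyed by passing from $Y=\mathrm{Min}_{X}(h)$ to the ambient $X$. This is precisely where the isometric embedding of Theorem~\ref{3.2} is indispensable: distances measured in $Y$ agree with distances measured in $X$, so the thickness condition certified in the systolic complex $Y$ transfers verbatim to $X$. Thus the main (and essentially only) obstacle is already neutralized by the embedding theorem, and the corollary follows by combining the elliptic/hyperbolic trichotomy of the definition with Corollary~\ref{3.6} and the systolic template of Theorem~\ref{3.7}.
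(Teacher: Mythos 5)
Your case split and your final transfer step are sound: the elliptic case is indeed trivial, and the observation that the isometric embedding of Theorem~\ref{3.2} is what lets the thickness condition $d_{X}(a,a')=d_{A_{k}}(a,a')$ survive the passage from $Y=\mathrm{Min}_{X}(h)$ to $X$ is exactly the paper's closing step. The gap is in your construction of the thick geodesic itself in the hyperbolic case. Lemma~\ref{3.1} produces only an $|h|$-geodesic, i.e.\ $d(\gamma(a),\gamma(b))=|a-b|$ for $|a-b|\le|h|$; it gives no control whatsoever over $d(\gamma(a),\gamma(b))$ when $|a-b|>|h|$. The definition of a thick geodesic demands exact ambient distances on \emph{all} multiples of $k$, which amounts to the nontrivial statement that displacements add up along the axis (morally, $|h^{j}|=j\,|h|$ for all $j$); ruling out sublinear, ``parabolic-like'' behaviour is precisely the substantive content of Elsner's result quoted as Theorem~\ref{3.7}, and it is where the uniform local finiteness hypothesis actually gets used. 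Moreover, the full subcomplex spanned by $\mathrm{Im}(\gamma)$ need not be a copy of any $A_{k}$: by Lemma~\ref{3.1}, vertices of $\gamma$ at parameter distance between $2$ and $|h|$ are non-adjacent, so the span cannot be $A_{k}$ for any $k\ge 2$, and it is $A_{1}$ (an honest bi-infinite geodesic) only if $\gamma$ is a \emph{global} geodesic, which you have not established. So ``the $A_{k}$ whose axis is $\mathrm{Im}(\gamma)$'' is not a defined object, and the detour through Corollary~\ref{3.6} does not repair this: an $h^{n}$-invariant geodesic for some $n\ge 1$ is strictly weaker than an $h$-invariant thick geodesic, and no argument is given for the upgrade.

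The repair is short and is the paper's actual proof: for hyperbolic $h$, the complex $Y=\mathrm{Min}_{X}(h)$ is systolic by Theorem~\ref{3.4} and is uniformly locally finite (being a full subcomplex of the uniformly locally finite $X$), so Theorem~\ref{3.7} applies directly to $h$ acting on $Y$; since $h$ fixes no simplex of $Y$, the theorem yields an $h$-invariant thick geodesic inside $Y$, which is then a thick geodesic of $X$ by Theorem~\ref{3.2}. In other words, you should invoke Theorem~\ref{3.7} wholesale inside the systolic subcomplex rather than attempting to rebuild its conclusion from Lemma~\ref{3.1}; Corollary~\ref{3.6} is not needed at all.
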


\begin{proof}

Let $Y = \rm{Min}_{X} (h)$. Theorem \ref{3.4} implies that $Y$ is systolic. Then, by Theorem \ref{3.7}, in $Y$ either there is an $h$-invariant simplex (elliptic case)
or there is an $h$-invariant thick geodesic (hyperbolic case). Since, by Theorem \ref{3.2}, $Y^{(1)}$ is isometrically embedded into $X$, the $h$-invariant simplex of $Y$, respectively the $h$-invariant thick geodesic of $Y$, also belongs to $X$. 

\end{proof}

\begin{theorem}\label{3.9}
Let $h$ be a hyperbolic simplicial isometry of a uniformly locally finite systolic complex $X$.
If in $X$ there exists an $h^{n}$-invariant geodesic for some $n \geq 1$, then for any vertex $x \in \rm{Min}_{X}(h^{n}) \subset X$, there exists an $h^{n}$-invariant geodesic passing through $x$.
\end{theorem}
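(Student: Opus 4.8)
The plan is to pass to the single isometry $g := h^{n}$ and to show that the $|g|$-geodesic furnished by Lemma \ref{3.1} is in fact a genuine geodesic. Thus $g$ is a hyperbolic isometry of $X$ of translation length $|g|$, and by hypothesis $g$ admits an invariant geodesic $\delta$. Recall that for any vertex $x \in \mathrm{Min}_{X}(g)$, Lemma \ref{3.1} associates a $g$-invariant simplicial path $\gamma$ through $x$, namely the concatenation of the iterates $g^{k}(\alpha)$ of a geodesic $\alpha$ from $x$ to $g(x)$, and asserts that $\gamma$ is a $|g|$-geodesic. The whole point is to upgrade this local geodesicity to a global one.

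The key quantity is the stable translation length $\tau(g) := \lim_{k \to \infty} d(y, g^{k}(y))/k$. This limit exists and is independent of $y$: the sequence $k \mapsto d(y, g^{k}(y))$ is subadditive, so by Fekete's lemma $\tau(g) = \inf_{k \geq 1} d(y, g^{k}(y))/k$, while the estimate $|d(y, g^{k}(y)) - d(y', g^{k}(y'))| \leq 2 d(y, y')$ shows the value does not depend on the basepoint. Evaluating at a vertex that realizes the minimal displacement gives the general bound $\tau(g) \leq |g|$. For the reverse inequality I would use $\delta$: since $g$ has no fixed simplex it cannot act on $\delta$ as a reflection (a reflection of a triangulated line fixes a vertex or an edge), so $g$ translates $\delta$ by some integer $\ell > 0$; because $\delta$ is a genuine geodesic, $d(a_{0}, g^{k}(a_{0})) = k \ell$ for $a_{0} \in \delta$, whence $\tau(g) = \ell \geq |g|$. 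Combining the two bounds yields $\tau(g) = |g|$.

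Now fix $x \in \mathrm{Min}_{X}(g)$. Fekete's identity gives $d(x, g^{k}(x)) \geq k\,\tau(g) = k |g|$, while the triangle inequality together with $d(x, g(x)) = |g|$ gives $d(x, g^{k}(x)) \leq k |g|$; hence $d(x, g^{k}(x)) = k |g|$ for all $k \geq 1$. Consequently the restriction of $\gamma$ to the parameter interval $[0, k|g|]$ joins $x$ to $g^{k}(x)$ and has length $k|g| = d(x, g^{k}(x))$, so it is a geodesic segment for every $k$. Since every subsegment of a geodesic is again a geodesic, and since the $g$-invariance of $\gamma$ (with $g$ an isometry) allows any pair of parameters to be shifted into a range $[0, k|g|]$, the path $\gamma$ is a bi-infinite geodesic. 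It is $g$-invariant and passes through $x$, which is the desired $h^{n}$-invariant geodesic.

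The substantive obstacle is purely conceptual: one has to recognize that the only function of the existence hypothesis is to pin the stable translation length to $|g|$, after which the subadditivity (Fekete) estimate automatically converts the local $|g|$-geodesic of Lemma \ref{3.1} into a global geodesic. The single place where hyperbolicity ("no fixed simplices") enters is in ruling out that $g$ acts on $\delta$ by a reflection.
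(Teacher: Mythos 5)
Your proof is correct, but note that for this statement the paper contains no internal argument at all: Theorem \ref{3.9} is quoted from Elsener (\cite{E2}, the remark on page $48$), so your write-up is a self-contained substitute rather than a variant of a proof given here. The route you take --- using the hypothesized invariant geodesic $\delta$ only to pin the stable translation length $\tau(h^{n})$ to $|h^{n}|$, invoking Fekete's lemma to convert $\tau(h^{n}) = \inf_{k} d(x, h^{nk}(x))/k$ into the lower bound $d(x, h^{nk}(x)) \geq k|h^{n}|$, and then upgrading the $|h^{n}|$-geodesic of Lemma \ref{3.1} through $x$ to a global geodesic via equivariant shifting of parameter intervals --- is exactly the mechanism underlying the cited remark, and all the individual steps (subadditivity, basepoint independence via the $2d(y,y')$ estimate, the identification of $\tau$ with the translation parameter $\ell$ along $\delta$, and the endpoint computation $d(x, h^{nk}(x)) = k|h^{n}|$ forcing the concatenation to be geodesic) check out.

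One step is asserted without justification: that $g = h^{n}$ is itself hyperbolic, i.e.\ stabilizes no simplex. You need this twice --- to apply Lemma \ref{3.1} to $g$, and to exclude that $g$ acts on $\delta$ as a reflection or trivially (either of which stabilizes a vertex or an edge of $\delta$, and either of which would also make $\ell = 0$ possible). This is arguably implicit in the statement, since the paper defines $\mathrm{Min}_{X}(\cdot)$ only for hyperbolic isometries, but it is worth saying why it holds, and this is the one place the uniform local finiteness hypothesis genuinely enters your argument: if $h^{n}$ stabilized a simplex, every $\langle h \rangle$-orbit would be bounded, and the fixed point theorem for (weakly) systolic complexes (\cite{Prz}, Theorem $1.2$; \cite{ChOs}, Theorem $5.3$) would then force $h$ to stabilize a simplex, contradicting the hypothesis that $h$ is hyperbolic. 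With that one sentence added, your proof is complete.
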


For the proof see \cite{E2}, Remark page $48$.

\begin{corollary}\label{3.10}
Let $h$ be a hyperbolic simplicial isometry of a uniformly locally finite weakly systolic complex $X$.
If in $X$ there exists an $h^{n}$-invariant geodesic for some $n \geq 1$, then for any vertex $x \in \rm{Min}_{X}(h^{n}) \subset X$, there exists an $h^{n}$-invariant geodesic passing through $x$.
\end{corollary}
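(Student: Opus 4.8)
The plan is to mimic the proofs of Corollaries \ref{3.6} and \ref{3.8}: transfer the systolic statement Theorem \ref{3.9} across the isometric embedding supplied by Theorem \ref{3.2}. Set $g = h^{n}$ and $Y = \rm{Min}_{X}(g) = \rm{Min}_{X}(h^{n})$. The first thing I would record is that $g$ again has no fixed simplices, i.e. $g$ is hyperbolic; this is precisely where the hypothesis that an $h^{n}$-invariant geodesic exists is used, since along such a geodesic $g$ acts as a nontrivial translation and therefore fixes no point and no simplex. Granting this, Theorem \ref{3.4} applied to the isometry $g$ shows that $Y$ is systolic, and Theorem \ref{3.2} applied to $g$ shows that $Y^{(1)}$ is isometrically embedded into $X$. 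Moreover $Y$, being a subcomplex of the uniformly locally finite complex $X$, is itself uniformly locally finite, so all the hypotheses of Theorem \ref{3.9} are available inside $Y$.

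Next I would verify that the given $g$-invariant geodesic already lives in $Y$. Along this geodesic $g$ is a translation by $|g|$, so every vertex $p$ on it satisfies $d_{X}(p, g(p)) = |g|$, that is, $p \in \rm{Min}_{X}(g) = Y$. By Theorem \ref{3.2} the inclusion $Y^{(1)} \hookrightarrow X^{(1)}$ preserves distances, so any path whose vertices lie in $Y$ and which is geodesic in $X$ remains geodesic in $Y$; applying this to every finite sub-path shows that $Y$ contains a $g$-invariant geodesic. This is exactly the input needed to invoke Theorem \ref{3.9} for the complex $Y$.

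I would then apply Theorem \ref{3.9} to the hyperbolic isometry $g$ of the uniformly locally finite systolic complex $Y$, with power $1$: for every vertex $x \in \rm{Min}_{Y}(g)$ there is a $g$-invariant geodesic in $Y$ passing through $x$. By Lemma \ref{3.3} one has $\rm{Min}_{Y}(g) = \rm{Min}_{X}(g) = Y$, so the conclusion applies to every vertex $x \in \rm{Min}_{X}(h^{n})$. Finally, pushing the resulting geodesic forward through the isometric embedding of Theorem \ref{3.2} produces a $g$-invariant, hence $h^{n}$-invariant, geodesic of $X$ through $x$, which completes the argument.

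The step I expect to be the main obstacle is the first one: confirming that $g = h^{n}$ genuinely satisfies the no-fixed-simplex hypothesis demanded by Theorems \ref{3.4}, \ref{3.2} and \ref{3.9}, together with the accompanying check that the hypothesised invariant geodesic descends to an honest geodesic of $Y$ (and not merely a path). Once these two points are secured, the rest is the routine transfer already carried out for Corollaries \ref{3.6} and \ref{3.8}.
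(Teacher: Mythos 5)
Your overall transfer strategy (systolic statement plus isometric embedding) is the same as the paper's, but your pivot to $g = h^{n}$ creates a genuine gap precisely at the step you flagged. To apply Theorems \ref{3.2} and \ref{3.4} to $g$ you must know that $h^{n}$ has no fixed simplices, and your justification --- that along the hypothesised $h^{n}$-invariant geodesic $g$ ``acts as a nontrivial translation'' --- does not follow from invariance alone: an isometry preserving a bi-infinite geodesic may reflect it (fixing a vertex or inverting an edge, hence stabilizing a simplex) or fix it pointwise. Ruling this out requires extra input not in the paper's toolkit, e.g.\ observing that if $h^{n}$ stabilized a simplex then all $h$-orbits would be bounded and then invoking a fixed point theorem for weakly systolic complexes (the Chepoi--Osajda result is only mentioned in the introduction, never established or used in this section) to contradict hyperbolicity of $h$. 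Moreover, even granting that $g$ translates the geodesic by some $k \geq 1$, your claim that $k = |g|$, so that the geodesic lies in $\mathrm{Min}_{X}(h^{n})$, is asserted rather than proved; it needs an argument such as: for $x$ realizing $|g|$ and $p$ on the geodesic, $mk = d(p, g^{m}(p)) \leq 2d(p,x) + m|g|$, and let $m \to \infty$.

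The paper's proof sidesteps both problems by never promoting $h^{n}$ to the role of the hyperbolic isometry. It sets $Y = \mathrm{Min}_{X}(h)$, applies Theorems \ref{3.4} and \ref{3.2} only to $h$ (whose hyperbolicity is a hypothesis), obtains the invariant geodesic inside $Y$ from Theorem \ref{3.5} rather than from the corollary's hypothesis, and then invokes Theorem \ref{3.9} for $h$ acting on the systolic complex $Y$ --- the statement of Theorem \ref{3.9} already carries the $h^{n}$ flexibility in its conclusion, so no hyperbolicity of $h^{n}$ and no containment of an invariant geodesic in $\mathrm{Min}_{X}(h^{n})$ ever needs to be checked. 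If you want to keep your route, you must supply the two missing arguments above; as written, the first one in particular is a real gap, since the claim ``invariant implies nontrivially translated'' is false for isometries of geodesics in general.
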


\begin{proof}
Let $Y = \rm{Min}_{X}(h)$. Theorem \ref{3.4} implies that $Y$ is systolic.
According to Theorem \ref{3.5}, in $Y$ (and then, by Corollary \ref{3.6}, also in $X$) there exists an $h^{n}$-invariant geodesic for some $n \geq 1$.  Hence, by Theorem \ref{3.9}, for any vertex $x \in \rm{Min}_{X}(h^{n}) \subset Y$, there exists an $h^{n}$-invariant geodesic passing through $x$. Since, by Theorem \ref{3.2}, $Y^{(1)}$ is isometrically embedded into $X$, this implies that for any vertex $x \in \rm{Min}_{X}(h^{n}) \subset Y \subset X$, there exists an  $h^{n}$-invariant geodesic passing through $x$.
\end{proof}

\end{document}